\newcommand{\A}{\mathbf{A}}
\newcommand{\Q}{\mathbf{Q}}
\newcommand{\C}{\mathbf{C}}
\newcommand{\Z}{\mathbf{Z}}
\newcommand{\Fp}{\mathbf{F}_{\!p}}
\newcommand{\F}{\mathbf{F}}
\newcommand{\GL}{{\mathbf{GL}}}
\newcommand{\SL}{{\mathbf{SL}}}
\newcommand{\PGL}{{\mathbf{PGL}}}
\newcommand{\PSL}{{\mathbf{PSL}}}
\renewcommand{\O}{\mathcal{O}}
\newcommand{\p}{\mathfrak p}
\newcommand{\q}{\mathfrak q}
\renewcommand{\Re}{\operatorname{Re}}
\newcommand{\cP}{\mathcal P}
\def\Gal{\operatorname{Gal}}
\def\Aut{\operatorname{Aut}}
\def\Hom{\operatorname{Hom}}
\def\det{\operatorname{det}}
\def\hairspace{\kern .05em}
\newcommand{\smat}[4]{\bigl[\begin{smallmatrix}#1&#2\\ #3&#4\end{smallmatrix}\bigr]}
\newcommand{\mr}[1]{(\href{https://mathscinet.ams.org/mathscinet-getitem?mr=#1}{MR#1})}
\numberwithin{equation}{section}
\newtheorem{theorem}{Theorem}[section]
\newtheorem{corollary}[theorem]{Corollary}
\newtheorem{lemma}[theorem]{Lemma}
\newtheorem{proposition}[theorem]{Proposition}
\theoremstyle{definition}
\newtheorem{definition}[theorem]{Definition}
\newtheorem{remark}[theorem]{Remark}
\newtheorem{question}[theorem]{Question}
\begin{document}

\begin{frontmatter}[classification=text]

\title{Stronger arithmetic equivalence} 

\author[avs]{Andrew V. Sutherland\thanks{Supported by NSF grant DMS-1522526 and Simons Foundation grant 550033.}}

\begin{abstract}
Motivated by a recent result of Prasad, we consider three stronger notions of arithmetic equivalence: \emph{local integral equivalence}, \emph{integral equivalence}, and \emph{solvable equivalence}.
In addition to having the same Dedekind zeta function (the usual notion of arithmetic equivalence), number fields that are equivalent in any of these stronger senses must have the same class number, and solvable equivalence forces an isomorphism of adele rings.  Until recently the only nontrivial example of integral and solvable equivalence arose from a group-theoretic construction of Scott that was exploited by Prasad.  Here we provide infinitely many distinct examples of solvable equivalence, including a family that contains Scott's construction as well as an explicit example of degree 96.  We also construct examples that address questions of Scott, and of Guralnick and Weiss, and shed some light on a question of Prasad.
\end{abstract}
\end{frontmatter}

\section{Introduction}

Number fields that have the same Dedekind zeta function are said to be \emph{arithmetically equivalent}.  Arithmetically equivalent number fields need not be isomorphic, but they necessarily have the same normal closure and share many arithmetic invariants.  The first nontrivial example of arithmetically equivalent number fields was given by Gassmann \cite{Gassmann}, who showed that all such examples arise from a simple group-theoretic construction, a \emph{Gassmann triple} $(G,H_1,H_2)$ of finite groups in which $H_1$ and~$H_2$ are subgroups of $G$ that intersect every $G$-conjugacy class with the same cardinality; see Proposition~\ref{prop:gassmann} for several equivalent definitions.
Gassmann proved that number fields $K_1$, $K_2$ with Galois closure~$L$ are arithmetically equivalent if and only if $(\Gal(L/\Q),\Gal(L/K_1),\Gal(L/K_2))$ is a Gassmann triple.
The number fields $K_1$ and $K_2$ are isomorphic if and only if $\Gal(L/K_1)$ and $\Gal(L/K_2)$ are conjugate in $\Gal(L/\Q)$; we are thus interested in \emph{nontrivial Gassmann triples} $(G,H_1,H_2)$, those in which $H_1$ and~$H_2$ are nonconjugate subgroups of $G$.

Gassmann triples $(G,H_1,H_2)$ naturally arise in many other settings, most notably in the construction of isospectral manifolds.  As shown by Sunada \cite{Sunada}, if $\pi\colon M\to M_0$ is a normal finite Riemannian covering with transformation group $G$, the quotient manifolds $M/H_1$ and $M/H_2$ are \emph{isospectral}: they have the same sequence of Laplacian eigenvalues. Unlike the number field case, $M_1$ and $M_2$ may be isometric even when $H_1$ and $H_2$ are nonconjugate, but if $H_1$ and $H_2$ are nonisomorphic and $M$ is the universal covering of~$M_0$, then $M/H_1$ and $M/H_2$ have nonisomorphic fundamental groups~$H_1$ and $H_2$ and cannot be isometric; see \cite[\S 4.2]{Shafarevich} and \cite[Corollary\ 1]{Sunada}.  A consequence of this result is that there are infinitely many distinct ways in which one cannot ``hear the shape of a drum'' \cite{GWW,Kac,Milnor}.
A similar result holds in algebraic geometry: if $X$ is a projective curve over a number field $K$ and  $(G,H_1,H_2)$ is a Gassmann triple with $G\subseteq\Aut(X)$, then the Jacobians of the quotient curves $X/H_1$ and $X/H_2$ are isogenous over~$K$, as proved by Prasad and Rajan in \cite{PR}.  As shown in \cite{AKMS}, this result can be generalized to \'etale Galois covers of $K$-varieties.
There is also a discrete analog to Sunada's theorem in which one considers a finite graph~$\Gamma$ with automorphism group $G$: Gassmann triples $(G,H_1,H_2)$ can be used to construct nonisomorphic isospectral graphs $\Gamma/H_1$ and $\Gamma/H_2$, subject to conditions on $H_1$ and~$H_2$; see~\cite{HH}.  An introduction to the topics of arithmetic equivalence and isospectrality can be found in \cite{SutherlandNotes}.

Subgroups $H_1,H_2$ of $G$ form a Gassmann triple $(G,H_1,H_2)$ if and only if the permutation modules $\Q[H_1\backslash G]$ and $\Q[H_2\backslash G]$ given by the $G$-action on right cosets are isomorphic as $\Q[G]$-modules. We then say that $H_1$ and $H_2$ are \emph{rationally equivalent}, and if $\Z[H_1\backslash G]$ and $\Z[H_2\backslash G]$ are isomorphic as $\Z[G]$-modules, we say that $H_1$ and $H_2$ are \emph{integrally equivalent}. Prasad calls $(G,H_1,H_2)$ a \emph{refined Gassmann triple} when $H_1,H_2\le G$ are integrally equivalent, and shows that if $G$ is the Galois group of a Galois number field $L/\Q$ then the fixed fields $K_1:=L^{H_1}$ and $L_2:=K_2^{H_2}$ not only have the same Dedekind zeta function, they must also have isomorphic idele groups (and in particular, isomorphic class groups); see \cite[Theorem~2]{Prasad}. The first (and so far only) nontrivial example of a refined Gassmann triple was constructed by Scott~\cite{Scott92} more than thirty years ago. Prasad notes that this example can be realized by number fields, and that such number fields not only have the same Dedekind zeta function and isomorphic idele groups, they have isomorphic rings of adeles \cite[Theorem~3]{Prasad}, and are thus \emph{locally isomorphic}, meaning their local algebras are isomorphic at every place (see Theorem \ref{thm:localiso}).  Thus even when taken in aggregate these invariants are not enough to guarantee an isomorphism of number fields.\footnote{One can attach auxiliary $L$-functions to a number field that in combination with the Dedekind zeta function ensure an isomorphism of number fields whenever all of these $L$-functions coincide~\cite{CdSXMS,Smit}; for function fields see \cite{BV,CKvdZ,Solomatin}.}

As noted by Prasad, Scott's example is essentially the only nontrivial example of integral equivalence currently known \cite[Remark~1]{Prasad}.  It is not clear whether the particular feature of Scott's refined Gassmann triple that allowed Prasad to prove an isomorphism of adele rings is necessarily enjoyed by others (assuming there are any). Whether Scott's example is a singular special case or just the most accessible example of a general phenomenon remains an open question.

In this article we consider two alternative strengthenings of the notion of arithmetic equivalence: \emph{local integral equivalence} and \emph{solvable equivalence}.  The latter  implies the former and is sufficient to prove equality of the number field invariants considered by Prasad, notably including local isomorphism, which is not obviously implied by integral equivalence (indeed, we show that it is not implied by the similar but weaker notion of local integral equivalence).

An attractive feature of both local integral equivalence and solvable equivalence is that they are much easier conditions to check than integral equivalence; see  Propositions~\ref{prop:pred} and \ref{prop:localequiv}, and Definition~\ref{def:solvequiv}.
In this article we provide infinitely many nontrivial examples of solvably equivalent triples $(G,H_1,H_2)$ that can be realized as Galois groups of number fields (see Theorem~\ref{thm:solvequiv}), and we prove that
\begin{itemize}
\setlength{\itemsep}{-4pt}
\item locally integrally equivalent subgroups need not be isomorphic (see \S\ref{sec:locnoniso});
\item locally integrally equivalent number fields need not be locally isomorphic (see \S\ref{sec:locnonlociso});
\item locally integrally equivalent subgroups need not be integrally equivalent (see \S\ref{sec:d32});
\item solvably equivalent subgroups need not be integrally equivalent (see \S\ref{sec:d96}).
\end{itemize}

We construct an explicit example of locally integrally equivalent number fields of degree 32 arising from a triple $(G,H_1,H_2)$ with $H_1\not\simeq H_2$, and an explicit example of solvably equivalent number fields of degree 96 that are not integrally equivalent.  Thus solvable equivalence does not imply integral equivalence; we leave open the question of whether integral equivalence implies solvable equivalence.

The example in \S\ref{sec:locnoniso} negatively answers a question of Guralnick and Weiss \cite[Question 2.11]{GWe} and is relevant to the question of Prasad \cite[Question 1]{Prasad} as to whether integrally equivalent subgroups must be isomorphic, since it shows that locally integrally equivalent subgroups need not be. The solvably equivalent subgroups we construct are all isomorphic, which leads to the question of whether solvably equivalent subgroups are necessarily isomorphic.  This question is perhaps more accessible than Prasad's question, since the only example of integral equivalence currently known arises in a setting where rational equivalence is already enough to force isomorphism (rationally equivalent subgroups of $\GL_2(\Fp)$, $\SL_2(\Fp)$, $\PSL_2(\Fp)$ must be isomorphic, see \cite[Question~1]{Prasad} and \cite[Remark~3.7]{Sutherland}).  The example of solvable equivalence given in \S\ref{sec:d96} does not arise in this setting, and one can find many others.

The example in \S\ref{sec:locnonlociso} refines an answer to a question of Stuart and Perlis \cite[\S 4]{PerlisStuart} given by Mantilla-Soler \cite[Theorem 3.7]{Mantilla-Soler} by showing that the sum of the ramification indices above a given prime in arithmetically equivalent number fields need not coincide even when their products do (as they must for number fields that are locally integrally equivalent; see Proposition~\ref{prop:localequiv}).

The examples in \S\ref{sec:d32}, \S\ref{sec:d96} negatively answer a question of Guralnick and Weiss \cite[Question~2.10]{GWe} as to whether local integral equivalence implies integral equivalence (this question appears to have also been addressed in the thesis of D. Hahn in the case of solvable groups \cite{Roggenkamp}).  The example in \S\ref{sec:d96} also addresses a question of Scott \cite[Remark~4.3]{Scott92} regarding low rank permutation modules.

\section{Background and preparation}
In this section we recall background material, set notation, and summarize some of the results we will use.  The material in this section is well known to experts, but we  provide short proofs in cases where we were unable to find a suitable reference (a few of these results seem to be folklore).

Let~$G$ be a finite group.
For each subgroup $H\le G$ we use $[H\backslash G]$ to denote the transitive $G$-set given by the (right) action of $G$ on (right) cosets of $H$; this action is faithful if and only if the intersection of all the $G$-conjugates of $H$ (its \emph{normal core} in $G$) is the trivial group.  We use $\chi_H\colon G\to \Z$ to denote the permutation character $g\mapsto \#[H\backslash G]^g$ that sends $g$ to the number of $H$-cosets it fixes (the induced character~$1_H^G$), and note that $\chi_H(g)\ne 0$ if and only if $g$ is conjugate to an element of $H$.

We extend~$\chi_H$ to subgroups $K$ of $G$ by defining $\chi_H(K)\coloneqq \#[H\backslash G]^K$ (the \emph{mark} of $K$ on $[H\backslash G]$), so that $\chi_H(\langle g\rangle) = \chi_H(g)$; equivalently, $\chi_H(K)$ is the number of singleton fibers in the map $[H\backslash G]\to [H\backslash G/K]$ defined by $Hg\mapsto HgK$.
For $g\in G$ we have $HgK=Hg$ if and only if $gKg^{-1}\subseteq H$, and it follows that

\begin{equation}\label{eq:chiHK}
\chi_H(K) = \frac{\#\bigl\{g\in G:gKg^{-1}\le H\bigr\}}{\#H} = \frac{\#N_G(K)}{\#H} \#\bigl\{ gKg^{-1}\le H:g\in G\bigr\},
\end{equation}
where $N_G(K)$ denotes the normalizer of $K$ in $G$.

\begin{lemma}\label{lem:charconj}
Let $H$ and $K$ be subgroups of a finite group $G$.  The integer $\chi_H(K)$ depends only on the $G$-conjugacy classes of $H$ and $K$ and the function $\chi_H$ depends only on the $G$-conjugacy class of~$H$.
\end{lemma}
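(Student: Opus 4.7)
The plan is to argue directly from the identity \eqref{eq:chiHK}, which expresses $\chi_H(K)$ as a count of group elements $g\in G$ satisfying the single condition $gKg^{-1}\le H$, divided by $\#H$. Both the domain $G$ of summation and the quantity $\#H$ are manifestly invariant under replacing $H$ by a conjugate, so the bulk of the work is two short changes of variable.

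First I would handle the dependence on $H$. If $H' = aHa^{-1}$ for some $a\in G$, then $gKg^{-1}\le H'$ if and only if $(a^{-1}g)K(a^{-1}g)^{-1}\le H$; setting $g' = a^{-1}g$ gives a bijection of $G$ with itself, so the numerator in \eqref{eq:chiHK} is unchanged, while $\#H' = \#H$. Hence $\chi_{H'}(K) = \chi_H(K)$. Next, for the dependence on $K$: if $K' = bKb^{-1}$, then $gK'g^{-1}\le H$ if and only if $(gb)K(gb)^{-1}\le H$, and $g\mapsto gb$ is again a bijection of $G$, so $\chi_H(K') = \chi_H(K)$.

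The second assertion of the lemma is then an immediate consequence of the first: since $\chi_{H'}(K) = \chi_H(K)$ for every subgroup $K\le G$ whenever $H$ and $H'$ are $G$-conjugate, we have $\chi_{H'} = \chi_H$ as functions on subgroups (and hence, by restriction to cyclic subgroups $\langle g\rangle$, as characters on $G$).

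There is really no main obstacle here; the content of the lemma is packaged entirely into the formula \eqref{eq:chiHK} that precedes it, and the proof amounts to two elementary reindexings. The only thing I would double-check before finalizing is that the identity \eqref{eq:chiHK} is stated in the form I am using (with the numerator counting elements rather than subgroups), which it is.
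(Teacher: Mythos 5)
Your proof is correct and is essentially the paper's own argument: the paper simply observes that replacing $H$ or $K$ by a $G$-conjugate does not change the right-hand side of \eqref{eq:chiHK}, and your two changes of variable ($g\mapsto a^{-1}g$ and $g\mapsto gb$) are exactly the details behind that observation.
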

\begin{proof}
Replacing either $H$ or $K$ with a $G$-conjugate does not change the RHS of \eqref{eq:chiHK}.
\end{proof}

If $\mathcal P$ is a class of groups (e.g. cyclic groups or solvable groups), we call its elements \emph{$\cP$-groups}, and refer to the subgroups of a group $G$ that lie in $\mathcal P$ as its \emph{$\cP$-subgroups}.  We say that $\cP$ is \emph{subgroup-closed} if it contains all subgroups of its elements.
A function that maps subgroups of $G$ to subgroups of $G$ is \emph{$G$-class preserving} if it maps subgroups to $G$-conjugates.

\begin{proposition}\label{prop:pred}
Let $G$ be a finite group and let $\mathcal P$ be a subgroup-closed class of groups.
For any two subgroups $H_1,H_2\le G$ the following are equivalent:
\begin{enumerate}[{\rm (i)}]
\setlength{\itemsep}{0pt}
\item There is $G$-class preserving bijection between the sets of $\cP$-subgroups of $H_1$ and $H_2$;
\item $\#\{gKg^{-1}\le H_1:g\in G\} = \#\{gKg^{-1}\le H_2:g\in G\}$ for every $\cP$-subgroup $K$ of $G$;
\item $\chi_{H_1}(K)=\chi_{H_2}(K)$ for every $\cP$-subgroup $K$ of $G$;
\item $\chi_{H_1}(K)=\chi_{H_2}(K)$ for every $\cP$-subgroup $K$ of $H_1$ or $H_2$;
\item the $G$-sets $[H_1\backslash G]$ and $[H_2\backslash G]$ are isomorphic as $K$-sets for every $\cP$-subgroup $K$ of $G$.
\item the $G$-sets $[H_1\backslash G]$ and $[H_2\backslash G]$ are isomorphic as $K$-sets for every $\cP$-subgroup $K$ of $H_1$ or $H_1$.
\end{enumerate}
\end{proposition}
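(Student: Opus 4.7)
I would prove the six statements equivalent using two tools: subgroup-closedness of $\cP$ (which identifies $\cP$-subgroups of $H_i$ with $\cP$-subgroups of $G$ that happen to lie inside $H_i$), and the identity~\eqref{eq:chiHK}.  Organizationally I would show (i) $\Leftrightarrow$ (ii) $\Leftrightarrow$ (iii), then (iii) $\Leftrightarrow$ (iv), and finally (iii) $\Leftrightarrow$ (v), with (iv) $\Leftrightarrow$ (vi) treated by the same argument as (iii) $\Leftrightarrow$ (v).

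For (i) $\Leftrightarrow$ (ii): grouping the $\cP$-subgroups of $H_i$ by their $G$-conjugacy class shows that a $G$-class preserving bijection exists iff, for every $G$-conjugacy class $\mathcal C$ of $\cP$-subgroups of $G$, the number of members of $\mathcal C$ contained in $H_1$ equals the number contained in $H_2$; taking a representative $K$ of $\mathcal C$, this common count is exactly $\#\{gKg^{-1}\le H_i:g\in G\}$.  For (ii) $\Leftrightarrow$ (iii): rearranging~\eqref{eq:chiHK} gives
\[
\chi_{H_i}(K)\,\#H_i \;=\; \#N_G(K)\cdot\#\{gKg^{-1}\le H_i:g\in G\},
\]
and plugging $K=\{1\}$ (a $\cP$-subgroup of $G$ by subgroup-closedness, assuming $\cP$ is nonempty) into (iii) yields $\#H_1=\#H_2$, after which the displayed identity makes (ii) and (iii) equivalent term by term.

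The implication (iii) $\Rightarrow$ (iv) is immediate.  For (iv) $\Rightarrow$ (iii), let $K$ be a $\cP$-subgroup of $G$: if both $\chi_{H_1}(K)$ and $\chi_{H_2}(K)$ vanish there is nothing to check, and otherwise some $G$-conjugate $K'$ of $K$ lies in whichever of $H_1,H_2$ the corresponding mark is nonzero on, and $K'$ is again a $\cP$-subgroup (of that $H_i$) because conjugation is an isomorphism; Lemma~\ref{lem:charconj} at both ends combined with (iv) applied to $K'$ then gives $\chi_{H_1}(K)=\chi_{H_1}(K')=\chi_{H_2}(K')=\chi_{H_2}(K)$.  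For (iii) $\Leftrightarrow$ (v): the standard mark characterization asserts that two finite $K$-sets $X,Y$ are isomorphic iff $\#X^L=\#Y^L$ for every subgroup $L\le K$; subgroup-closedness makes every such $L$ a $\cP$-subgroup of $G$, so (iii) supplies the required equalities and hence (v), while evaluating (v) at $L=K$ recovers $\chi_{H_1}(K)=\chi_{H_2}(K)$.  The same reasoning, with every $L$ and $K$ constrained to sit inside $H_1$ or $H_2$, proves (iv) $\Leftrightarrow$ (vi).

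I do not foresee a real obstacle: once~\eqref{eq:chiHK} and Lemma~\ref{lem:charconj} are in hand, the proposition is a short bookkeeping exercise.  The one mildly delicate moment is (iv) $\Rightarrow$ (iii), which relies on the fact that $\chi_H(K)=0$ precisely when no $G$-conjugate of $K$ sits inside $H$, so that the ``$H_1$ or $H_2$'' in (iv) can be invoked on whichever side happens to be nontrivial.
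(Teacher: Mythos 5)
Your proposal is correct and follows essentially the same route as the paper: (i) $\Leftrightarrow$ (ii) by counting within $G$-conjugacy classes, (ii) $\Leftrightarrow$ (iii) via \eqref{eq:chiHK}, (iii) $\Leftrightarrow$ (iv) from the vanishing of $\chi_{H_i}(K)$ off conjugates of subgroups of $H_i$, and the fixed-point counts $\chi_{H_i}(L)$ for $L\le K$ determining the $K$-set structure. The only real difference is that you cite the table-of-marks characterization of finite $K$-sets as a black box for (iii) $\Rightarrow$ (v), whereas the paper proves exactly that fact inline by matching $K$-orbits according to the conjugacy classes of their stabilizers; your aside that one must first extract $\#H_1=\#H_2$ (available from (iii) at $K=\{1\}$, and from (ii) only when $\cP$ contains all cyclic groups, as it does in every application here) is a point the paper glosses over.
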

\begin{proof}
(i) $\Leftrightarrow$ (ii) is immediate, (ii) $\Leftrightarrow$ (iii) follows from \eqref{eq:chiHK}, (iii) $\Leftrightarrow$ (iv) follows from the fact that $\chi_{H_i}(K)=0$ if $K$ is not conjugate to a subgroup of~$H_i$, and the implications (v) $\Rightarrow$ (vi) $\Rightarrow$ (iv) are clear; it thus suffices to show (iii) $\Rightarrow$ (v).

Let $K$ be a $\cP$-group and consider the $K$-sets $X_1:=[H_1\backslash G]$ and $X_2=[H_2\backslash G]$.
We have $\chi_{H_1}(k)=\chi_{H_2}(k)$ for all $k\in K$, thus $\#X_1=\chi_{H_i}(1)=\#X_2$, and $X_1$ and $X_2$ both have $\frac{1}{\#K}\sum_k \chi_{H_i}(k)$ orbits.  The stabilizer $H\le K$ of any element $x$ in a $K$-orbit $X$ of $X_1$ or $X_2$ is a $\cP$-group, and $\chi_{H_1}(H)=\chi_{H_2}(H)$ implies that the $K$-orbits of $X_1$ and $X_2$ can be put in a bijection that preserves conjugacy classes of stabilizers (and thus preserves cardinalities).  If $H$ is the stabilizer of an element of a $K$-orbit $X$ in $X_1$ or $X_2$, then $X$ is isomorphic to the $K$-set $[H\backslash K]$.
It follows that $X_1$ and $X_2$ are isomorphic $K$-sets.
\end{proof}

We call subgroups $H_1,H_2\le G$ that satisfy the equivalent properties of Proposition~\ref{prop:pred} \emph{$\cP$-equivalent}, and this defines an equivalence relation on the subgroups of $G$. A necessary condition for $\cP$-equivalence is that $H_1$ and $H_2$ must have the same \emph{$\cP$-statistics}, meaning that they contain the same number of $\cP$-subgroups in every isomorphism class of groups.  When $\cP$ is the class of cyclic groups this amounts to having the same \emph{order statistics} (numbers of elements of each order).

\begin{remark}
Condition (iv) of Proposition~\ref{prop:pred} provides an efficient method for testing whether two subgroups of $G$ are $\cP$-equivalent.
Conditions (ii) and (iii) can both be used to efficiently partition the set of subgroups of $G$ into $\cP$-equivalence classes without the need for pairwise testing; conjugate subgroups lie in the same equivalence class, so it suffices to work with a set of conjugacy class representatives.
\end{remark}

\begin{lemma}\label{lemma:regiso}
Let $K,H_1,H_2$ be finite groups with $|H_1|=|H_2|=n$, let $\phi_1\colon K\to H_1$ and $\phi_2\colon K\to H_2$ be injective group homomorphisms, and let $\rho_1\colon H_1\to S_n$ and $\rho_2\colon H_2\to S_n$ be left regular representations of $H_1$ and $H_2$.  Then $K_1\coloneqq \rho_1(\phi_1(H_0))$ and $K_2\coloneqq\rho_2(\phi_2(H_0))$ are conjugate subgroups of $S_n$.
\end{lemma}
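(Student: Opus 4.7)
The plan is to interpret $K_1$ and $K_2$ as the images of two free $K$-actions on a set of size $n$, and then appeal to the elementary fact that any two free $K$-sets of the same cardinality are isomorphic (hence conjugate inside $S_n$).

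More concretely, first I would fix bijections $\tau_i\colon H_i\iso\{1,\dots,n\}$ that realize the regular representations, so that $\rho_i(h)=\tau_i\circ\lambda_h\circ\tau_i^{-1}$, where $\lambda_h$ denotes left multiplication by $h$. Composition with $\phi_i$ then turns $H_i$ into a left $K$-set via $k\cdot h\coloneqq\phi_i(k)h$. Since $\phi_i$ is injective and left multiplication in a group is free, this $K$-action is free; hence $H_i$ decomposes into $n/\#K$ orbits, each isomorphic as a $K$-set to the regular $K$-set. In particular, the two $K$-sets $H_1$ and $H_2$ are isomorphic. Pick any $K$-equivariant bijection $\alpha\colon H_1\to H_2$, so $\alpha(\phi_1(k)h)=\phi_2(k)\alpha(h)$ for all $k\in K$ and $h\in H_1$.

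Next I would define $\sigma\coloneqq\tau_2\circ\alpha\circ\tau_1^{-1}\in S_n$ and check that $\sigma$ conjugates $K_1$ to $K_2$. A direct calculation gives, for any $k\in K$,
\[
\sigma\,\rho_1(\phi_1(k))\,\sigma^{-1}
=\tau_2\circ\alpha\circ\lambda_{\phi_1(k)}\circ\alpha^{-1}\circ\tau_2^{-1}
=\tau_2\circ\lambda_{\phi_2(k)}\circ\tau_2^{-1}
=\rho_2(\phi_2(k)),
\]
where the middle equality uses the equivariance of $\alpha$. Hence $\sigma K_1\sigma^{-1}=K_2$, as required.

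I do not expect any serious obstacle: once one spots that the action of $K$ on $H_i$ is free, everything reduces to the orbit-decomposition of free $K$-sets. The only mildly fussy point is keeping the bijections $\tau_i$ and the equivariance of $\alpha$ straight in the conjugation computation, but this is purely bookkeeping.
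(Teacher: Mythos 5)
Your proof is correct and follows essentially the same route as the paper: both arguments observe that the two induced $K$-actions on an $n$-element set are free with the same number of orbits, hence isomorphic as $K$-sets, and that any $K$-equivariant bijection yields the conjugating permutation. Your version merely spells out the bookkeeping with the bijections $\tau_i$ more explicitly.
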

\begin{proof}
The $K$-sets $X_1=X_2=\{1,\ldots,n\}$ given by the actions of $K_1\coloneqq\rho_1(\phi_1(H_0))$ and $K_2\coloneqq \rho_2(\phi_2(H_0))$ are free $K$-sets with the same number of orbits, hence isomorphic.  There is thus a $K$-equivariant map $\sigma\colon X_1\to X_2$ with $\sigma\in S_n$ satisfying $ \rho_1(\phi_1(k))\sigma = \sigma\rho_2(\phi_2(k))$ for $k\in K$, and $\sigma^{-1}K_1\sigma=K_2$.
\end{proof}

\begin{corollary}\label{cor:regiso}
Let $\cP$ be a subgroup-closed class of groups.
Finite groups $H_1$ and $H_2$ of the same order can be embedded as $\cP$-equivalent subgroups of some group $G$ if and only if they have the same $\cP$-statistics.
\end{corollary}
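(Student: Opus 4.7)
The plan is to handle the two directions separately, with the forward direction essentially free and the converse reducing quickly to Lemma~\ref{lemma:regiso}.

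For the ``only if'' direction, suppose $H_1,H_2\le G$ are $\cP$-equivalent. By condition (i) of Proposition~\ref{prop:pred}, there is a $G$-class preserving bijection between the $\cP$-subgroups of $H_1$ and of $H_2$. Any two $G$-conjugate subgroups are isomorphic as abstract groups, so this bijection in particular preserves isomorphism type. Hence $H_1$ and $H_2$ contain the same number of $\cP$-subgroups in every isomorphism class, i.e., they have identical $\cP$-statistics.

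For the ``if'' direction, assume $H_1$ and $H_2$ have the same $\cP$-statistics and common order $n$, and take $G=\sym{n}$, embedding $H_1$ and $H_2$ into $G$ via left regular representations $\rho_1,\rho_2$. Since each $\rho_i$ is injective, the $\cP$-subgroups of $\rho_i(H_i)$ are precisely the images $\rho_i(K')$ with $K'\le H_i$ a $\cP$-subgroup, and $\rho_i$ preserves isomorphism type. The equality of $\cP$-statistics then lets us choose, for each isomorphism class of $\cP$-groups, an arbitrary bijection between the $\cP$-subgroups of $\rho_1(H_1)$ and those of $\rho_2(H_2)$ within that isomorphism class; assembling these yields a bijection that preserves isomorphism type globally. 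To conclude via condition (i) of Proposition~\ref{prop:pred}, I would verify that this bijection is $G$-class preserving: given matched subgroups $\rho_1(K_1')$ and $\rho_2(K_2')$ with $K_1'\cong K_2'$, fix an abstract $\cP$-group $K$ and isomorphisms $\phi_i\colon K\to K_i'\hookrightarrow H_i$. Then Lemma~\ref{lemma:regiso} directly gives that $\rho_1(\phi_1(K))=\rho_1(K_1')$ and $\rho_2(\phi_2(K))=\rho_2(K_2')$ are conjugate in $\sym{n}$.

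There is no real obstacle here: the corollary is a repackaging of Lemma~\ref{lemma:regiso} once one observes that matching $\cP$-subgroups by isomorphism type can be done independently on each isomorphism class. The only thing to be careful about is that the chosen bijection need not respect anything beyond isomorphism type, which is fine because Lemma~\ref{lemma:regiso} upgrades abstract isomorphism to $\sym{n}$-conjugacy under the regular representation.
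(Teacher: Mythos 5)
Your proposal is correct and follows exactly the route the paper intends: necessity is immediate from condition (i) of Proposition~\ref{prop:pred} (conjugate subgroups are isomorphic), and sufficiency embeds $H_1,H_2$ into $\sym{n}$ via their regular representations and invokes Lemma~\ref{lemma:regiso} to upgrade each isomorphism-type matching of $\cP$-subgroups to an $\sym{n}$-conjugacy, yielding a $G$-class preserving bijection. The paper compresses all of this into one line, so your write-up is simply a fuller account of the same argument.
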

\begin{proof}
The necessity of having the same $\cP$-statistics is obvious, and Lemma~\ref{lemma:regiso} proves sufficiency.
\end{proof}

For any integral domain $R$, we use $R[H\backslash G]$ to denote the corresponding permutation module; this is the free $R$-module with basis $[H\backslash G]$ equipped with the $R$-linear extension of the $G$-action on $[H\backslash G]$; we thus view $R[H\backslash G]$ as a (right) $R[G]$-module.

If $H_1,H_2\le G$ have the same index $n$, after ordering the $G$-sets $[H_1\backslash G]$ and $[H_2\backslash G]$, we may uniquely identify each $R[G]$-module homomorphism $R[H_1\backslash G]\to R[H_2\backslash G]$ with a matrix $M\in R^{n\times n}$ whose determinant $\det M$ does not depend on our choices.
If $\rho_1,\rho_2\colon G\to S_n$ are the permutation representations of~$G$ acting on $\{1,\ldots,n\}$ via our chosen orderings of $[H_1\backslash G]$ and $[H_2\backslash G]$, respectively, then the matrices $M\in R^{n\times n}$ that correspond to elements of $\Hom_{R[G]}(R[H_1\backslash G],R[H_2\backslash G])$ are precisely those that are fixed by the diagonal action of $\rho_1\times \rho_2$ on matrix entries; in other words, the entries of $M$ must satisfy
\[
M_{ij} = M_{\rho_1(g)(i),\rho_2(g)(j)}\qquad(\text{for all }g\in G).
\]
We define
\[
d(H_1,H_2):=\gcd \left\{\det M:M\in \Hom_{\Z[G]}(\Z[H_1\backslash G],\Z[H_2\backslash G])\right\},
\]
and extend this definition to all subgroups of $G$ by defining $d(H_1,H_2)=0$ whenever $\#H_1\ne \#H_2$.

We now give several equivalent conditions for subgroups to be $\cP$-equivalent when $\cP$ is the class of cyclic groups.

\begin{proposition}\label{prop:gassmann}
Let $G$ be a finite group.  For all subgroups $H_1$ and $H_2$ of $G$ the following are equivalent:
\begin{enumerate}[{\rm (i)}]
\setlength{\itemsep}{-1pt}
\item There is a bijection of sets $H_1\leftrightarrow H_2$ that preserves $G$-conjugacy;
\item $\#(H_1\cap C) = \#(H_2\cap C)$ for every conjugacy class $C$ of $G$;
\item $\chi_{H_1}(K)=\chi_{H_2}(K)$ for every cyclic $K\le G$;
\item The $G$-sets $[H_1\backslash G]$ and $[H_2\backslash G]$ are isomorphic as $K$-sets for every cyclic $K\le G$;
\item $\Q[H_1\backslash G]\simeq\Q[H_2\backslash G]$;
\item $d(H_1,H_2)\ne 0$.
\end{enumerate}
\end{proposition}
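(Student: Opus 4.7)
The plan is to leverage Proposition~\ref{prop:pred} with $\cP$ the (subgroup-closed) class of cyclic groups for most of the equivalences, and to treat the three conditions not already covered---namely (ii), (v), and (vi)---with short direct arguments. Specializing Proposition~\ref{prop:pred} immediately yields (iii) $\Leftrightarrow$ (iv) and reduces the task to linking (i) with (ii) and (iii), and (iii) with (v) and (vi).

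For the elementary side, (i) $\Leftrightarrow$ (ii) is a rearrangement: a bijection $H_1\to H_2$ preserving $G$-conjugacy is the same thing as a family of bijections $H_1\cap C\to H_2\cap C$ indexed by the $G$-conjugacy classes $C$. For (ii) $\Leftrightarrow$ (iii) I would use the identity $\chi_H(g) = \#C_G(g)\cdot\#(H\cap C_g)/\#H$, obtained by counting $\{x\in G : xgx^{-1}\in H\}$ in two ways (with $C_g$ the conjugacy class of $g$); since (iii) applied to $K=\langle g\rangle$ says $\chi_{H_1}(g)=\chi_{H_2}(g)$ for every $g\in G$, and the case $g=1$ already forces $\#H_1=\#H_2$, the identity makes (iii) equivalent to $\#(H_1\cap C_g)=\#(H_2\cap C_g)$ for every $g$, i.e.\ (ii).

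For (iii) $\Rightarrow$ (v) the content is the classical fact that $\Q[G]$-modules in characteristic zero are determined up to isomorphism by their characters: equal characters give an isomorphism after extending scalars to $\C$, and this descends to an isomorphism over $\Q$ by the standard observation that $\Hom_{\Q[G]}(V,W)\otimes_\Q\C = \Hom_{\C[G]}(V_\C,W_\C)$, so the nonvanishing of $\det$ at some $\C$-point forces the same at some $\Q$-point (as $\Q$ is infinite and the $\det$-vanishing locus is a proper Zariski-closed subvariety). The converse (v) $\Rightarrow$ (iii) is immediate from characters. Finally, (v) $\Leftrightarrow$ (vi) is a denominator-clearing argument: any $\Q[G]$-isomorphism lifts, after scaling by a common denominator, to a $\Z[G]$-homomorphism of nonzero determinant, giving $d(H_1,H_2)\ne 0$; conversely, any integral homomorphism with $\det\ne 0$ becomes, upon tensoring with $\Q$, an injective $\Q[G]$-map between spaces of equal dimension, hence an isomorphism. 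The only genuinely nontrivial step is the $\C$-to-$\Q$ descent in (iii) $\Rightarrow$ (v), but since $\Q[G]$ is semisimple by Maschke this is entirely classical; the rest is a careful bookkeeping of formulas already in hand.
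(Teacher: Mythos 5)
Your proposal is correct, and for the equivalences (i) $\Leftrightarrow$ (ii) $\Leftrightarrow$ (iii) $\Leftrightarrow$ (iv) and (v) $\Leftrightarrow$ (vi) it follows the paper essentially verbatim: the same counting identity $\chi_H(g)\,\#H=\#(H\cap C_g)\,\#Z(g)$ handles (ii) $\Leftrightarrow$ (iii), Proposition~\ref{prop:pred} handles (iii) $\Leftrightarrow$ (iv), and denominator-clearing handles (v) $\Leftrightarrow$ (vi). The one step where you genuinely diverge is (iii) $\Rightarrow$ (v). The paper observes that $\dim_\Q(\Q[H_i\backslash G]^K)=\chi_{H_i}(K)$ for cyclic $K$ and invokes the corollary to Serre's Theorem~30, i.e.\ the fact that a rational representation is determined by its fixed-point dimensions on cyclic subgroups; this uses condition (iii) for \emph{all} cyclic $K$. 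You instead use only the values $\chi_{H_i}(\langle g\rangle)=\chi_{H_i}(g)$, conclude $\C[H_1\backslash G]\simeq\C[H_2\backslash G]$ from equality of characters, and descend to $\Q$ via the Noether--Deuring argument ($\Hom_{\Q[G]}$ complexifies to $\Hom_{\C[G]}$, and a polynomial nonvanishing at a $\C$-point of a $\Q$-vector space is nonvanishing at a $\Q$-point since $\Q$ is infinite). Your route is more self-contained and uses ostensibly less input (character values rather than all cyclic marks), at the cost of carrying out the descent explicitly; the paper's route is a one-line citation and has the mild advantage of staying entirely over $\Q$. Both are standard and correct, so this is a matter of taste rather than substance.
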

\begin{proof}
The equivalence of (i) and (ii) is immediate.
The equivalence of (ii) and (iii) follows from the formula $\chi_{H_i}(g)\#H_i=\#(H_i\cap C(g))\#Z(g)$,
where $C(g)$ is the conjugacy class of $g$ and $Z(g)$ is its centralizer (in $G$); see \cite[Eq.\,8]{Perlis78}.
The equivalence of (iii) and (iv) follows from applying Proposition~\ref{prop:pred} to the class of cyclic groups.
For the equivalence of (iii) and (v), note that $\dim_\Q(\Q[H_i\backslash G]^K)=\chi_{H_i}(K)$ for cyclic $K\le G$ and then apply the corollary to \cite[Theorem~30]{Serre}.
Clearing the denominators in $M\in \Hom_{\Q[G]}(\Q[H_1\backslash G],\Q[H_2\backslash G])$ shows the equivalence of (v) and (vi).
\end{proof}

\begin{remark}
The condition $K\le G$ in (iii), (iv) can be replaced by ``$K\le H_1$ or $K\le H_2$'' via Lemma 2.2.
\end{remark}

\begin{definition}
Subgroups $H_1$ and $H_2$ of a finite group $G$ that satisfy the equivalent conditions of Proposition~\ref{prop:gassmann} are said to be \emph{rationally equivalent} (or \emph{Gassmann equivalent}).
\end{definition}

A triple of groups $(G,H_1,H_2)$ with $H_1,H_2\le G$ rationally equivalent is called a \emph{Gassmann triple} \cite{Gassmann}.
By Proposition~\ref{prop:gassmann}, rational equivalence defines an equivalence relation on the subgroups of $G$.  Conjugate subgroups of $G$ are necessarily rational equivalent, so we may view this as an equivalence relation on conjugacy classes of subgroups.
Rational equivalence classes may be arbitrarily large \cite{Komatsu84}.

We are interested in the nontrivial rational equivalence classes, those which contain nonconjugate but rationally equivalent subgroups $H_1,H_2\le G$.
Equivalently, we are interested in the cases where $[H_1\backslash G]\not\simeq [H_2\backslash G]$ as $G$-sets, but $\Q[H_1\backslash G]\simeq\Q[H_2\backslash G]$ as $G[\Q]$-modules.
Standard examples include the subgroups $H_1\coloneqq \bigl\{\smat{1}{*}{0}{*}\in \GL_2(\F_p)\bigr\}$ and $H_2\coloneqq \bigl\{\smat{1}{0}{*}{*}\in \GL_2(\F_p)\bigr\}$ of $G\coloneqq \GL_2(\F_p)$, where $p$ is an odd prime \cite{deSmit98}, and similar examples in $\GL_n(\F_p)$ for $n>2$ and any prime $p$.
In these examples the subgroups $H_1$ and $H_2$ are not $G$-conjugate, but transposition gives a bijection $H_1\leftrightarrow H_2$ that preserves $G$-conjugacy.
The smallest example occurs for the group $G$ with GAP identifier $\langle 32,43\rangle$, which contains two nonconjugate rationally equivalent subgroups $H_1$ and $H_2$ isomorphic to the Klein $4$-group.\footnote{A GAP identifier $\langle m,n\rangle$ denotes the isomorphism class of an abstract group of order $m$; the positive integer $n$ is an ordinal that distinguishes distinct isomorphism classes of groups of order $m$. For $m\le 2000$ not equal to 1024 explicit presentations of these groups can be found in the small groups database \cite{BEO}, which is available in both GAP \cite{GAP} and Magma \cite{Magma}.}

\begin{remark}\label{rem:noniso}
Rationally equivalent subgroups necessarily have the same order but need not be isomorphic.  The smallest example of a Gassmann triple $(G,H_1,H_2)$ with $H_1\not\simeq H_2$ arises for $G\simeq \langle 384,5755\rangle$ with subgroups $H_1\simeq \langle 16,3\rangle$ and $H_2\simeq \langle 16,10\rangle$.  The groups $H_1$ and $H_2$ are the first of infinitely many pairs of nonisomorphic groups with the same order statistics (one can take $(\Z/p\Z)^3$ and the Heisenberg group $H_3(\Fp)$ for any prime $p$, for example).  Corollary~\ref{cor:regiso} implies that all such pairs $H_1$ and $H_2$ can be realized as part of a Gassmann triple $(G,H_1,H_2)$.
\end{remark}

The original motivation for studying rational equivalence stems from its relationship to zeta functions of number fields.
Recall that the \emph{Dedekind zeta function} of a number field $K$ is defined by
\[
\zeta_K(z)\coloneqq \prod_\p (1-N(\p)^{-z})^{-1},
\]
where $\p$ varies over primes of $K$ (nonzero prime ideals of its ring of integers $\O_K$) and $N(\p)\coloneqq [\O_K:\p]$ is the cardinality of the residue field at $\p$ (its absolute norm).
The Euler product for $\zeta_K(z)$ defines a holomorphic function on $\Re(z)>1$ that extends to a meromorphic function on $\C$ with a simple pole at $z=1$ whose residue is given by the \emph{analytic class number formula}:
\begin{equation}\label{eq:acnf}
\lim_{z\to 1^+}(z-1)\zeta_K(z) = \frac{2^{r}(2\pi)^sh_KR_K}{\#\mu(K)|D_K|^{1/2}}.
\end{equation}
Here $r$ and $s$ are the number of real and complex places of $K$ (its \emph{signature}), $h_K$ is the class number, $R_K$ is the regulator, $\mu(K)$ is the group of roots of unity in $K^\times$, and $D_K$ is the discriminant of $K$.

\begin{theorem}\label{thm:gassmann}
For number fields $K_1$ and $K_2$ the following are equivalent:
\begin{enumerate}[{\rm (i)}]
\setlength{\itemsep}{0pt}
\item $\zeta_{K_1}(s)=\zeta_{K_2}(s)$;
\item $K_1$ and $K_2$ have Galois closure $L$ with $\Gal(L/K_1), \Gal(L/K_2)\le \Gal(L/\Q)$ rationally equivalent;
\item There is a bijection between the primes of $K_1$ and $K_2$ that preserves residue fields.
\end{enumerate}
\end{theorem}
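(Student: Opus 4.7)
The plan is to cycle through three implications (i)$\Leftrightarrow$(iii), (ii)$\Rightarrow$(i), and (i)$\Rightarrow$(ii), which together establish the theorem.  The first of these is Dirichlet-series bookkeeping: the Euler product $\zeta_K(z)=\prod_\p(1-N(\p)^{-z})^{-1}$ together with unique factorization of ideals in $\O_K$ shows that $\zeta_K$ determines (and is determined by) the multiset of prime norms $\{N(\p):\p\text{ a prime of }K\}$, and equality of these multisets for $K_1$ and $K_2$ is precisely the existence of a residue-field-preserving bijection between their primes.

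For (ii)$\Rightarrow$(i), given the common Galois closure $L/\Q$ of the hypothesis, with $G=\Gal(L/\Q)$ and $H_i=\Gal(L/K_i)$, I would invoke the Artin $L$-function identity $\zeta_{K_i}(z)=L(z,\chi_{H_i},L/\Q)$ expressing each Dedekind zeta function as the Artin $L$-function of the induced permutation character $\chi_{H_i}=1_{H_i}^G$.  Rational equivalence of $H_1$ and $H_2$ is exactly the statement $\chi_{H_1}=\chi_{H_2}$ by Proposition~\ref{prop:gassmann}, so the two $L$-functions, and hence the two zeta functions, coincide.

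For (i)$\Rightarrow$(ii), embed $K_1$ and $K_2$ in any Galois extension $L/\Q$ containing both (the compositum of their Galois closures will do), and set $G=\Gal(L/\Q)$, $H_i=\Gal(L/K_i)$.  At every prime $p$ unramified in $L$, the Euler factor of $\zeta_{K_i}$ at $p$ equals $\prod_j(1-p^{-f_j z})^{-1}$, where $f_1,f_2,\ldots$ are the cycle lengths of $\Frob_p$ acting on $[H_i\backslash G]$; unique factorization of these finite Euler products over the irreducibles $1-p^{-fz}$ forces the multisets of cycle lengths to coincide for $i=1,2$, and in particular $\chi_{H_1}(\Frob_p)=\chi_{H_2}(\Frob_p)$.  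Chebotarev's density theorem ensures that every conjugacy class of $G$ contains some such $\Frob_p$, so $\chi_{H_1}=\chi_{H_2}$ as class functions on $G$ and Proposition~\ref{prop:gassmann} delivers rational equivalence.  This in turn forces the normal cores of $H_1$ and $H_2$ in $G$ to coincide --- an element lies in $\bigcap_g gH_ig^{-1}$ iff its $G$-conjugacy class is contained in $H_i$, a condition depending only on $|H_i\cap C|$ for each class $C$ --- so $K_1$ and $K_2$ share the Galois closure named in (ii).

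The main delicacy lies in avoiding a direct (ii)$\Rightarrow$(iii) argument, which would have to match primes above each ramified $p$ as well and hence compare the orbit structures of the decomposition and inertia groups $D_p,I_p\le G$ on $[H_i\backslash G]$; since these groups need not be cyclic, Proposition~\ref{prop:gassmann} does not immediately provide the needed isomorphism of $D_p$-sets.  Routing through the $L$-function identity sidesteps this obstruction, and (i)$\Rightarrow$(iii) then recovers the ramified-prime bijection as a consequence.
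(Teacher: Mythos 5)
Your proposal is correct, but it is worth noting that the paper does not actually give an argument here: it disposes of all three equivalences with a single citation to Perlis's Theorem~1 in \cite{Perlis77}. What you have written is essentially a self-contained reconstruction of that classical proof, and each leg holds up. The equivalence (i)$\Leftrightarrow$(iii) is correctly reduced to the fact that $\zeta_K$ determines the multiset of prime norms; the one step you leave implicit is the inductive recovery of $\#\{\p : N(\p)=p^f\}$ from the coefficients of $p^{-fz}$ in $\log\zeta_K$ (or in the Euler factor at $p$), which is routine. For (ii)$\Rightarrow$(i) the appeal to invariance of Artin $L$-functions under induction, $\zeta_{K_i}=L(z,1_{H_i},L/K_i)=L(z,1_{H_i}^G,L/\Q)$, together with $\chi_{H_1}=\chi_{H_2}$ from Proposition~\ref{prop:gassmann}, is the standard and correct way to handle the ramified Euler factors without analyzing decomposition groups. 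For (i)$\Rightarrow$(ii) your use of unramified Euler factors plus Chebotarev to get $\chi_{H_1}(g)=\chi_{H_2}(g)$ for all $g\in G$ (every cyclic subgroup being $\langle g\rangle$ for some $g$) is exactly right; to finish you should add one sentence that rational equivalence in the large group $G$ descends to the common quotient $G/N$ by the shared normal core $N$, so that $H_1/N$ and $H_2/N$ are rationally equivalent inside $\Gal(L^N/\Q)$ where $L^N$ is the common Galois closure --- this descent is stated explicitly in the paper in the paragraph following the definition of arithmetic equivalence, so you may simply cite it. Your closing remark about why one should not attempt (ii)$\Rightarrow$(iii) directly is apt: the decomposition groups at ramified primes need not be cyclic, and the $L$-function detour is precisely how the literature avoids that comparison.
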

\begin{proof}
These equivalences all follow from \cite[Theorem 1]{Perlis77}.
\end{proof}

\begin{definition}
Number fields $K_1$ and $K_2$ that satisfy the equivalent conditions of Theorem~\ref{thm:gassmann} are said to be \emph{arithmetically equivalent}.
\end{definition}

If $K_1$ and $K_2$ are arithmetically equivalent number fields with common Galois closure $L$ and we put $G:=\Gal(L/\Q)$, $H_1:=\Gal(L/K_1)$, $H_2:=\Gal(L/K_2)$, then $(G,H_1,H_2)$ is a \emph{faithful} Gassmann triple, meaning that $\Q[H_1\backslash G]\simeq \Q[H_2\backslash G]$ is a faithful representation of $G$.
Equivalently, $H_1$ and $H_2$ have trivial normal core in $G$.
There is no loss of generality in restricting our attention to faithful Gassmann triples: if $H_1,H_2\le G$ are rationally equivalent then they necessarily have the same normal core $N$, the quotients $H_1/N,H_2/N\le G/N$ are rationally equivalent, and $H_1/N$ and $H_2/N$ are conjugate in $G/N$ if and only if $H_1$ and $H_2$ are conjugate in $G$.

Arithmetically equivalent number fields share many (but not all) arithmetic invariants.

\begin{theorem}\label{thm:arithequiv}
Arithmetically equivalent number fields have the same degree, discriminant, signature, and roots of unity.
\end{theorem}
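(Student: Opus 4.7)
The plan is to exploit the Gassmann triple formulation supplied by Theorem~\ref{thm:gassmann}. Let $L$ be the common Galois closure of $K_1$ and $K_2$ and set $G := \Gal(L/\Q)$, $H_i := \Gal(L/K_i)$, so that $(G,H_1,H_2)$ is a Gassmann triple. All four invariants should be extractable from the permutation character $\chi_{H_i}$ and the $G$-set $[H_i\backslash G]$, both of which are unchanged when $H_i$ is replaced by a rationally equivalent subgroup.

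The degree is immediate: $[K_i:\Q] = [G:H_i] = \chi_{H_i}(1)$, and rational equivalence gives $\chi_{H_1} = \chi_{H_2}$. For the signature, I would fix an embedding $L\hookrightarrow\C$, let $c\in G$ be the associated complex conjugation, and invoke Proposition~\ref{prop:gassmann}(iv) for the cyclic subgroup $\langle c\rangle$: the number of real (respectively complex) places of $K_i$ equals the number of $\langle c\rangle$-fixed points (respectively free $\langle c\rangle$-orbits) on $[H_i\backslash G]$, and both are preserved by any $\langle c\rangle$-equivariant bijection.

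For the discriminant, I would decompose $\chi_{H_i}=\sum_\chi m_{i,\chi}\chi$ into irreducible characters of $G$ and apply the conductor--discriminant formula $|D_{K_i}|=\prod_\chi f(\chi)^{m_{i,\chi}}$, where $f(\chi)$ is the Artin conductor of $\chi$; since $\chi_{H_1}=\chi_{H_2}$ the multiplicities coincide and therefore $|D_{K_1}|=|D_{K_2}|$ (the sign then agrees because the signatures do). For the roots of unity, let $\mu:=\mu(L)$ be of order $m$, let $\psi\colon G\to(\Z/m\Z)^\times$ encode the $G$-action on $\mu$, and note that $\mu(K_i)=\mu^{H_i}$ is determined by the subgroup $\psi(H_i)$. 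Because $\psi$ factors through the abelianization of $G$, the image $\psi(H_i)$ depends only on which $G$-conjugacy classes $H_i$ meets, and rational equivalence in the form of Proposition~\ref{prop:gassmann}(i) makes these sets coincide, so $\psi(H_1)=\psi(H_2)$ and $\mu^{H_1}=\mu^{H_2}$.

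The only substantive analytic input is the conductor--discriminant formula needed for the discriminant; the other three invariants fall out of the Proposition~\ref{prop:pred}/Proposition~\ref{prop:gassmann} machinery with no further use of the zeta function itself. The main subtlety is bookkeeping rather than difficulty: for each invariant one must identify the correct cyclic subgroup of $G$ (for the signature) or abelian quotient of $G$ (for the roots of unity) through which the invariant is read off.
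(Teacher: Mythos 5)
Your proposal is correct and fills in what the paper leaves to a citation: the paper's entire proof is ``See \cite[Theorem~1]{Perlis77}'', and your argument is essentially the standard one given there (equality of the induced characters $\chi_{H_1}=\chi_{H_2}$ for the degree, the action of complex conjugation on $[H_i\backslash G]$ for the signature, the conductor--discriminant formula applied to $\operatorname{Ind}_{H_i}^G 1$ for the discriminant, and the cyclotomic character factoring through $G^{\mathrm{ab}}$ for the roots of unity). Each step checks out, including the two small points you rely on implicitly: the sign of the discriminant is $(-1)^{s}$ with $s$ the number of complex places, and $\mu(K_i)=\mu(L)^{H_i}$ with the fixed subgroup determined by the subset $\psi(H_i)\subseteq(\Z/m\Z)^\times$, which rational equivalence forces to coincide for $i=1,2$.
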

\begin{proof}
See \cite[Theorem~1]{Perlis77}.
\end{proof}

The analytic class number formula \eqref{eq:acnf} implies that if $K_1$ and $K_2$ are arithmetically equivalent number fields then we must have
\[
h_{K_1}R_{K_1}=h_{K_2}R_{K_2},
\]
but it may happen that $h_{K_1}\ne h_{K_2}$ (in which case $R_{K_1}\ne R_{K_2}$), and even when $h_{K_1}=h_{K_2}$ the class groups need not be isomorphic.\footnote{The fields $\Q[x]/(x^7-3x^6+10x^5-21x^4-6x^3+58x^2-41x-6)$ and $\Q[x]/(x^7-x^6+x^5+5x^4+9x^3+5x^2-7x-4)$ with LMFDB \cite{LMFDB} labels \href{http://www.lmfdb.org/NumberField/7.3.1427382162361.1}{\texttt{7.3.1427382162361.1}} and \href{http://www.lmfdb.org/NumberField/7.3.1427382162361.2}{\texttt{7.3.1427382162361.2}} are an example; see \cite{BP} for analogous exceptions in the context of isospectral Riemannian manifolds.}
It follows from Theorem \ref{thm:arithequiv} that if $K_1$ and $K_2$ are arithmetically equivalent then a prime $p$ of $\Q$ ramifies in $K_1$ if and only if it ramifies in $K_2$.

\begin{remark}\label{rem:ramdiff}
The ramified rational primes in arithmetically equivalent number fields necessarily coincide, but they may have different factorization patterns.
This was shown by Perlis in \cite[page 351]{Perlis77} for the arithmetically equivalent number fields $K_1\coloneqq \Q(\sqrt[8]{97})$ and $K_2\coloneqq \Q(\sqrt[8]{1552})$ where we have
\[
2\O_{K_1} = \p_1\p_2\p_3^2\p_4^4\qquad\text{versus}\qquad 2\O_{K_2} = \q_1^2\q_2^2\q_3^2\q_4^2.
\]
In this example the products of the ramification indices differ, but the sums are the same. As shown by Mantilla-Soler \cite[Thm.\ 3.7]{Mantilla-Soler}, there are cases where the sums also differ.  Indeed, the number fields $K_1:=\Q[x]/(x^7 - 3x^6 + 4x^5 - 5x^4 + 3x^3 - x^2 - 2x + 1)$ and $K_2:=\Q[x]/(x^7 - x^5 - 2x^4 - 2x^3 + 2x^2 - x + 4)$ with LMFDB labels \href{https://www.lmfdb.org/NumberField/7.3.30558784.1}{\texttt{7.3.30558784.1}} and \href{https://www.lmfdb.org/NumberField/7.3.30558784.2}{\texttt{7.3.30558784.2}} are arithmetically equivalent with
\[
2\O_{K_1} = \p_1\p_2^4\qquad\text{versus}\qquad 2\O_{K_2} = \q_1\q_2^2.
\]
This example settled a question of Stuart and Perlis \cite[\S 4]{PerlisStuart}.
\end{remark}

For a number field $K$ with Galois closure $L$ that is the fixed field of $H\le G=\Gal(L/\Q)$, the decomposition of rational primes in $K$, can be computed using the $G$-set $[H\backslash G]$.  The lemma below can be used to explain the examples in Remark~\ref{rem:ramdiff} and to prove part (iii) of Theorem~\ref{thm:gassmann}.

\begin{lemma}
Let $L$ be a Galois extension of $\Q$ with Galois group $G$, let~$\p$ be a prime of $L$ above $p:=\p\cap\Q$ with decomposition group $D_\p$ and inertia group $I_\p$,
and let $K$ be the fixed field of $H\le G$.
\begin{enumerate}[{\rm (i)}]
\setlength{\itemsep}{0pt}
\item There is a bijection $[H\backslash G/D_\p]\to \{\text{primes of $K$ above $p$}\}$ defined by $H\sigma D_\p\mapsto \sigma(\p)\cap K$.
\item The prime $\sigma(\p)\cap K$ has ramification index $[H\sigma I_\p:H\sigma]$ and residue field degree $[H\sigma D_\p:H\sigma I_\p]$.
\end{enumerate}
\end{lemma}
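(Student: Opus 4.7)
The plan is to view the primes of $L$ above $p$ as the $G$-set $G/D_\p$ via the orbit map $\sigma\mapsto\sigma(\p)$, and then identify the primes of $K$ above $p$ with $H$-orbits on this set. Part (i) then falls out quickly: since $L/\Q$ is Galois, $G$ acts transitively on the primes of $L$ above $p$ with stabilizer $D_\p$, giving a $G$-equivariant bijection $G/D_\p\iso\{\text{primes of }L\text{ above }p\}$; since $L/K$ is Galois with group $H$, the primes of $L$ above any given prime $\q$ of $K$ form a single $H$-orbit, so the primes of $K$ above $p$ correspond to double cosets $H\backslash G/D_\p$ via $H\sigma D_\p\mapsto\sigma(\p)\cap K$. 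Well-definedness is automatic: replacing $\sigma$ by $h\sigma d$ with $h\in H$ and $d\in D_\p$ sends $\sigma(\p)\cap K$ to $h(\sigma(\p))\cap K=\sigma(\p)\cap K$, since $d$ fixes $\p$ and $h$ fixes $K$ pointwise.

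For (ii), I would combine three ingredients: (a) for $L/\Q$, the prime $\sigma(\p)$ has ramification index $|I_\p|$ and residue degree $[D_\p:I_\p]$ over $p$, since its decomposition and inertia groups are $\sigma D_\p\sigma^{-1}$ and $\sigma I_\p\sigma^{-1}$; (b) for $L/K$, the decomposition and inertia groups of $\sigma(\p)$ are $H\cap\sigma D_\p\sigma^{-1}$ and $H\cap\sigma I_\p\sigma^{-1}$; (c) multiplicativity of $e$ and $f$ in the tower $\Q\subseteq K\subseteq L$. Together these give
\[
e(\q/p)=\frac{|I_\p|}{|H\cap\sigma I_\p\sigma^{-1}|},\qquad f(\q/p)=\frac{[D_\p:I_\p]}{[H\cap\sigma D_\p\sigma^{-1}:H\cap\sigma I_\p\sigma^{-1}]}.
\]

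To identify the right-hand sides with the claimed coset indices, I would invoke the elementary identity $[H\sigma K:H\sigma]=[K:K\cap\sigma^{-1}H\sigma]$ for any subgroup $K\le G$, which follows from $H\sigma k_1=H\sigma k_2\iff k_1k_2^{-1}\in\sigma^{-1}H\sigma$, together with $|K\cap\sigma^{-1}H\sigma|=|H\cap\sigma K\sigma^{-1}|$. Applied with $K=I_\p$ this yields $[H\sigma I_\p:H\sigma]=|I_\p|/|H\cap\sigma I_\p\sigma^{-1}|=e(\q/p)$, and dividing the analogous formulas for $K=D_\p$ and $K=I_\p$ gives $[H\sigma D_\p:H\sigma I_\p]=f(\q/p)$.

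The main obstacle is not conceptually deep: it is the careful bookkeeping that translates the intersection-of-conjugates description of the decomposition and inertia groups of $\sigma(\p)$ in the subextension $L/K$ into the double-coset indices that appear in the statement. Everything else reduces to standard facts about Galois extensions and the multiplicativity of $e$ and $f$ in towers.
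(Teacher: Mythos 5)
Your proof is correct. Note that the paper does not actually prove this lemma --- it simply declares it well known and cites Neukirch and Wood --- and your argument (identify the primes of $L$ over $p$ with $G/D_\p$, pass to $H$-orbits to get double cosets, then compute $e$ and $f$ via multiplicativity in the tower $\Q\subseteq K\subseteq L$ together with the identity $[H\sigma K':H\sigma]=[K':K'\cap\sigma^{-1}H\sigma]$) is precisely the standard proof found in those references, so there is nothing to compare beyond observing that you have supplied the details the paper omits.
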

\begin{proof}
This is well known; see \cite[\S 9]{Neukirch} and \cite{Wood}, for example.
\end{proof}

Theorem~\ref{thm:gassmann} implies that if $K_1$ and $K_2$ are arithmetically equivalent number fields then for every unramified rational prime $p$ there is a bijection between the primes of $K_1$ above $p$ and the primes of~$K_2$ above $p$ such that the completions of $K_1$ and $K_2$ at corresponding primes above $p$ are isomorphic extensions of $\Q_p$, since (up to isomorphism) there is a unique unramified extension of $\Q_p$ of each degree.  By Theorem~\ref{thm:arithequiv}, this also holds for the archimedean place $\infty$ of $\Q$, since the signatures of $K_1$ and $K_2$ coincide,
but as shown by the examples of Remark~\ref{rem:ramdiff}, this need not hold at ramified primes.

\begin{definition}
Two number fields $K_1$ and $K_2$ are said to be \emph{locally isomorphic} if there is a bijection between the places of $K_1$ and the places of $K_2$ such that the completions at corresponding places are isomorphic (both as topological rings and as $\Q_p$-algebras); equivalently, $K_1\otimes_\Q\Q_p\simeq K_2\otimes_\Q\Q_p$ for all $p\le \infty$.  If this holds for all but finitely many places then $K_1$ and $K_2$ are said to be \emph{locally isomorphic almost everywhere}.
\end{definition}

For a number field $K$ we use $\A_K$ to denote its ring of adeles, which we may regard both as a topological ring and as an $\A_\Q$-algebra.

\begin{theorem}\label{thm:localiso}
Let $K_1$ and $K_2$ be number fields.  The following hold:
\begin{enumerate}[{\rm (i)}]
\setlength{\itemsep}{0pt}
\item $K_1$ and $K_2$ are locally isomorphic almost everywhere if and only if they are arithmetically equivalent,
and if and only if almost every prime of $\Q$ has the same number of primes above it in $K_1$ and $K_2$;
\item $K_1$ and $K_2$ are locally isomorphic if and only if $\A_{K_1}\simeq \A_{K_2}$ (as topological rings and $\A_\Q$-algebras);
\item if $K_1$ and $K_2$ are locally isomorphic then there is a natural isomorphism of their Brauer groups that commutes with all restriction maps induced by common inclusions of number fields.
\end{enumerate}
\end{theorem}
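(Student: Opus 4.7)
The plan is to prove the three parts in sequence, with the main work in part (i). For part (i), I would establish a cyclic chain of implications among the three conditions, labeled (A) arithmetic equivalence, (B) local isomorphism almost everywhere, and (C) equal number of primes above almost every rational prime. The implication (A) $\Rightarrow$ (B) is routine: at a rational prime $p$ unramified in both fields (true for almost all $p$), the Euler factor of $\zeta_{K_i}(s)$ at $p$ determines the multiset $\{f_v : v \mid p\}$ of residue degrees, and since unramified extensions of $\Q_p$ exist and are unique up to isomorphism in each degree, this determines $K_i \otimes_\Q \Q_p = \prod_{v \mid p} K_{i,v}$ as a $\Q_p$-algebra; the archimedean place is handled by signature invariance (Theorem~\ref{thm:arithequiv}). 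The implication (B) $\Rightarrow$ (C) is immediate from counting primitive idempotents.

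The crux is (C) $\Rightarrow$ (A). Let $L$ be the Galois closure of $K_1 K_2$, set $G = \Gal(L/\Q)$ and $H_i = \Gal(L/K_i)$. For $p$ unramified in $L$, the number of primes of $K_i$ above $p$ equals the number of $\langle \Frob_p \rangle$-orbits on $[H_i \backslash G]$, which by Burnside's formula is $\langle \chi_{H_i}|_{\langle \Frob_p\rangle}, \mathbf{1}\rangle$. By Chebotarev's density theorem, every conjugacy class of $G$ is realized by $\Frob_p$ for some $p$ outside the finite exceptional set, so the hypothesis forces $\langle \chi_{H_1}|_{\langle g\rangle}, \mathbf{1}\rangle = \langle \chi_{H_2}|_{\langle g\rangle}, \mathbf{1}\rangle$ for every $g \in G$. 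By Frobenius reciprocity this reads $\langle \chi_{H_i}, \operatorname{Ind}_{\langle g\rangle}^G \mathbf{1}\rangle_G$, and by Artin's induction theorem the characters $\operatorname{Ind}_C^G \mathbf{1}$ for $C \le G$ cyclic span the (rational) virtual characters of $G$ over $\Q$. Since $\chi_{H_1} - \chi_{H_2}$ is integer-valued hence a virtual character, expanding it in this spanning set and pairing with itself gives $\langle \chi_{H_1} - \chi_{H_2}, \chi_{H_1} - \chi_{H_2}\rangle = 0$, so $\chi_{H_1} = \chi_{H_2}$; arithmetic equivalence then follows from Theorem~\ref{thm:gassmann}.

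For part (ii), the forward direction follows directly from the restricted-product description of the adele ring: compatible local isomorphisms $K_{1,v_1} \iso K_{2,v_2}$ preserve the local rings of integers at almost all finite places and so glue to an isomorphism of topological $\A_\Q$-algebras. For the converse, I would use the idempotent $e_p \in \A_\Q$ at each place $p$ of $\Q$ (the adele equal to $1$ in the $p$-th component and $0$ elsewhere), which is preserved by any $\A_\Q$-algebra isomorphism. One identifies $e_p \cdot \A_{K_i} = K_i \otimes_\Q \Q_p = \prod_{v \mid p} K_{i,v}$, so one obtains isomorphisms of these finite products of local fields as $\Q_p$-algebras; primitive idempotents in a product of fields are uniquely determined, yielding the desired bijection of places above $p$ together with isomorphisms of the corresponding completions (continuity is automatic for maps between local fields).

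For part (iii), I would invoke the Brauer--Hasse--Noether exact sequence
\[
0 \to \operatorname{Br}(K_i) \to \bigoplus_v \operatorname{Br}(K_{i,v}) \xrightarrow{\sum \operatorname{inv}_v} \Q/\Z \to 0
\]
and use part (ii) to produce a bijection of places with compatible isomorphisms $\operatorname{Br}(K_{1,v_1}) \iso \operatorname{Br}(K_{2,v_2})$ that respect the canonical local invariant maps (since $\operatorname{inv}_v$ depends only on the local field); this yields a canonical $\operatorname{Br}(K_1) \iso \operatorname{Br}(K_2)$, whose compatibility with restriction maps induced by any common inclusion $F \hookrightarrow K_1, K_2$ follows from the functoriality of the local restriction maps $\operatorname{Br}(F_w) \to \operatorname{Br}(K_{i,v})$ for $v \mid w$. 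The main obstacle is (C) $\Rightarrow$ (A) in part (i): the hypothesis provides only one linear constraint per conjugacy class (a subgroup-sum vanishing $\sum_{h \in \langle g\rangle} \Delta(h) = 0$), and upgrading this to full equality of permutation characters genuinely requires Artin's induction theorem, since Möbius inversion on divisors of $|g|$ alone only recovers $\sum_{h} \Delta(h) = 0$ summed over generators of each cyclic subgroup, and generators in a cyclic subgroup may split into several $G$-conjugacy classes.
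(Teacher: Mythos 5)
The paper proves this theorem purely by citation (Perlis for the first equivalence in (i), Perlis--Stuart for the second, Iwasawa for the reverse implication in (ii), and Linowitz--McReynolds--Miller for (iii)), so your from-scratch arguments are necessarily a different route. Parts (i) and (ii) are correct. Your proof of (C) $\Rightarrow$ (A) --- Burnside's count of $\langle\Frob_\p\rangle$-orbits on $[H_i\backslash G]$, Chebotarev to realize every conjugacy class, Frobenius reciprocity, and Artin induction to get $\langle\Delta,\Delta\rangle=0$ from $\langle\Delta,\operatorname{Ind}_C^G\mathbf{1}\rangle=0$ for all cyclic $C$, where $\Delta\coloneqq\chi_{H_1}-\chi_{H_2}$ --- is a clean reproof of the Perlis--Stuart theorem; the one point to state carefully is that the $\operatorname{Ind}_C^G\mathbf{1}$ span the space of class functions depending only on the conjugacy class of $\langle g\rangle$, and that $\Delta$, being a difference of permutation characters, lies in that space (this is sharper than ``spans the rational virtual characters''). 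The idempotent argument for (ii) is the standard proof of Iwasawa's lemma and works because the theorem assumes the isomorphism is $\A_\Q$-linear.

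Part (iii) has a genuine gap at the restriction-compatibility claim. Constructing the isomorphism $\operatorname{Br}(K_1)\iso\operatorname{Br}(K_2)$ is fine: local invariant maps are intrinsic to the local fields, so the place bijection $v\mapsto\sigma(v)$ with $K_{1,v}\simeq K_{2,\sigma(v)}$ carries the kernel of $\sum\operatorname{inv}$ to the kernel of $\sum\operatorname{inv}$. But commutativity with $\operatorname{Br}(F)\to\operatorname{Br}(K_i)$ for a common subfield $F$ is not mere functoriality: since $\operatorname{inv}_v(\operatorname{res}_{K_i}\alpha)=[K_{i,v}:F_w]\operatorname{inv}_w(\alpha)$ for $w=v|_F$, the required identity is $[K_{1,v}:F_{v|_F}]\,a_{v|_F}=[K_{2,\sigma(v)}:F_{\sigma(v)|_F}]\,a_{\sigma(v)|_F}$ for every admissible invariant vector $(a_w)$, and testing against classes supported on two places forces both $\sigma(v)|_F=v|_F$ and equality of the local degrees over $F$. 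Neither follows from knowing $K_{1,v}\simeq K_{2,\sigma(v)}$ merely as $\Q_p$-algebras: an abstract isomorphism of completions need not respect the embedded copies of $F$, and distinct places of $F$ above $p$ can have isomorphic completions, so the bijection of places of $K_1$ and $K_2$ need not lie over the identity on places of $F$. Producing a place bijection compatible with every common subfield and preserving local degrees is exactly the content of the cited Linowitz--McReynolds--Miller theorem and requires a real argument (e.g.\ via the double-coset description of splitting relative to the subgroup of $\Gal(L/\Q)$ fixing $F$), not just the exactness of the Brauer--Hasse--Noether sequence.
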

\begin{proof}
The first equivalence in (i) follows from \cite[Theorem~1]{Perlis77} and the second was proved in~\cite{PerlisStuart}, the forward implication in (ii) is immediate and the reverse implication is due to Iwasawa \cite[Lemma 7]{Iwasawa} (also see \cite[Lemma 3]{Komatsu76}), and the implication in (iii) is proved in \cite{LMM}.
\end{proof}

\begin{remark}
The converse of part (iii) of Theorem~\ref{thm:localiso} is false.  Arithmetically equivalent number fields with naturally isomorphic Brauer groups need not be locally isomorphic, as shown in \cite{McReynolds}.
\end{remark}

Theorem \ref{thm:localiso} implies that locally isomorphic number fields are necessarily arithmetically equivalent, but the converse need not hold.  As observed in Remark~\ref{rem:ramdiff}, arithmetically equivalent number fields may have incompatible ramification indices, which precludes local isomorphism.

\begin{remark}\label{rem:localisoclassnum}
Locally isomorphic number fields need not have the same class number; the fields $\Q(\sqrt[8]{-33})$ and $\Q(\sqrt[8]{-528})$ with class numbers 256 and~128 are an example \cite[p.~214]{dSP}.
\end{remark}

The following proposition provides an effective way to test for local isomorphism.

\begin{proposition}\label{prop:decomp}
Let $L,K_1,K_2$ be number fields corresponding to a Gassmann triple $(G,H_1,H_2)$, and let $D_\p\subseteq G$ be the decomposition group of a place $\p$ of $L$ above a place $p$ of $\Q$.
Then $K_1\otimes_\Q \Q_p\simeq K_2\otimes_\Q \Q_p$ if and only if $[H_1\backslash G]$ and $[H_2\backslash G]$ are isomorphic as $D_\p$-sets.  These equivalent conditions necessarily hold for every unramified place $p$ of $\Q$.
\end{proposition}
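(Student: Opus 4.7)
The plan is to pass from \'etale $\Q_p$-algebras to $D_\p$-sets via the Grothendieck--Galois correspondence. Writing $L_\p$ for the completion of $L$ at~$\p$, the extension $L_\p/\Q_p$ is Galois with $\Gal(L_\p/\Q_p)$ canonically identified with $D_\p\le G$, which supplies an anti-equivalence between the category of finite \'etale $\Q_p$-algebras split by $L_\p$ and the category of finite $D_\p$-sets, implemented by $A\mapsto \Hom_{\Q_p\text{-alg}}(A,L_\p)$ with $D_\p$ acting by post-composition.

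The first task is to identify the $D_\p$-set attached to $K\otimes_\Q\Q_p$ for $K=L^H$. I would fix an embedding $\iota\colon L\hookrightarrow L_\p$ compatible with~$\p$ and exploit the fact that, because $L/\Q$ is Galois, every $\Q$-algebra embedding $L\to L_\p$ has image $\iota(L)$, so $\Hom_\Q(L,L_\p)\cong G$ via $\tau\mapsto \iota\circ\tau$. Restricting to $K=L^H$ then yields a natural bijection
\[
\Hom_{\Q_p}(K\otimes_\Q\Q_p,L_\p)=\Hom_\Q(L^H,L_\p)\iso G/H,
\]
and unwinding the post-composition action shows that $D_\p\le G$ acts on $G/H$ by left multiplication; up to the standard inversion identifying left and right cosets, this $D_\p$-set is $[H\backslash G]$.

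Applying the anti-equivalence to $H_1$ and $H_2$ simultaneously will then produce
\[
K_1\otimes_\Q\Q_p\simeq K_2\otimes_\Q\Q_p\iff [H_1\backslash G]\simeq [H_2\backslash G] \text{ as }D_\p\text{-sets,}
\]
establishing the main claim. For the final assertion, when~$p$ is unramified in~$L$ the inertia group $I_\p$ is trivial and $D_\p$ is cyclic (generated by the Frobenius element); the required isomorphism of $D_\p$-sets will then follow from Proposition~\ref{prop:gassmann}(iv) applied to the cyclic subgroup $K=D_\p\le G$.

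The main obstacle will be the first step: verifying the $D_\p$-equivariance and reconciling the left-coset description $G/H$ arising naturally from $\Hom$-sets with the right-coset convention $[H\backslash G]$ used in the paper. The rest is a formal consequence of the equivalence of categories.
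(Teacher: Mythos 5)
Your proposal is correct and follows essentially the same route as the paper: both invoke the Grothendieck--Galois correspondence between \'etale $\Q_p$-algebras and finite $D_\p$-sets to identify $K_i\otimes_\Q\Q_p$ with $[H_i\backslash G]$ viewed as a $D_\p$-set, and both deduce the unramified case from Proposition~\ref{prop:gassmann}(iv) because $D_\p$ is then cyclic. The only difference is that you unwind the correspondence explicitly via $\Hom_{\Q_p}(K\otimes_\Q\Q_p,L_\p)$ and the left/right coset bookkeeping, whereas the paper simply cites functoriality of the equivalence.
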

\begin{proof}
Recall that for any field $F$ with separable closure $\Omega$ there is a functorial equivalence between the category of \'etale $F$-algebras $A$ and the category of finite $\Gal(\Omega/F)$-sets $S$; see \cite[Theorem~8.20]{Milne}. The $\Gal(\Omega/F)$-action on $S$ is continuous, hence factors through a finite quotient $Q$, and by a $Q$-set $S$ we mean the $\Gal(\Omega/F)$-set $S$ with the action of each $\sigma\in \Gal(\Omega/F)$ given by the action of its projection to $Q$.

For $i=1,2$, the $G$-set $[H_i\backslash G]$ corresponds to the \'etale $\Q$-algebra $K_i$.  If we view $D_\p$ as the Galois group of the \'etale $\Q_p$-algebra $L\otimes \Q_p$, the $D_\p$-set $[H_i\backslash G]$ corresponds to the \'etale $\Q_p$-algebra $K_i\otimes \Q_p$.

The last statement follows from (iv) of Proposition~\ref{prop:gassmann}, since if $\p$ is unramified then $D_\p$ is cyclic.
\end{proof}

Finally we recall the following result on arithmetical isomorphisms which can be found in \cite[IV]{Klingen}.
\begin{proposition}\label{prop:arithiso}
Let $G$ be a finite group with subgroups $H_1,H_2\le G$, let $R$ be an integral domain, let $A$ be an $R[G]$-module, and let $A_1\coloneqq A^{H_1}$ and $A_2\coloneqq A^{H_2}$ be the $R$-submodules of $A$ fixed by $H_1$ and~$H_2$, respectively.
Every $M\in \Hom_{R[G]}(R[H_1\backslash G],R[H_2\backslash G])$ with $\det M\in R^{\times}$ induces an $R[G]$-module isomorphism $\delta_M\colon A_1\to A_2$.
\end{proposition}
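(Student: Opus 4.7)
My plan is to deduce the result from Frobenius reciprocity for the adjunction between induction and restriction of $R[G]$-modules. Identifying $R[H_i\backslash G]$ with the induced module $\mathrm{Ind}_{H_i}^G R$, Frobenius reciprocity yields a natural $R$-linear isomorphism
\[
\Phi_i\colon\Hom_{R[G]}(R[H_i\backslash G], A) \iso A^{H_i} = A_i,
\]
given concretely by $\phi\mapsto \phi(H_i)$, where $H_i$ denotes the identity coset in $[H_i\backslash G]$; the inverse sends $a\in A^{H_i}$ to the $R[G]$-linear extension of $H_i g\mapsto a\cdot g$, which is well-defined precisely because $a$ is $H_i$-fixed.

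First I would verify that $M$ is an isomorphism of $R[G]$-modules. One may assume $\#H_1=\#H_2$, for otherwise $M=0$ by the definition in force and the conclusion is vacuous. Then $R[H_1\backslash G]$ and $R[H_2\backslash G]$ are free $R$-modules of the same rank $n=[G:H_i]$, so a choice of bases identifies $M$ with a matrix in $R^{n\times n}$ whose determinant is, by hypothesis, a unit; Cramer's rule supplies an $R$-linear inverse, which is automatically $R[G]$-linear because $M$ is. Next I would invoke the Frobenius reciprocity isomorphisms above and define $\delta_M$ as the composition
\[
A_1 \xrightarrow{\Phi_1^{-1}} \Hom_{R[G]}(R[H_1\backslash G], A) \xrightarrow{(M^{-1})^{*}} \Hom_{R[G]}(R[H_2\backslash G], A) \xrightarrow{\Phi_2} A_2,
\]
where $(M^{-1})^{*}(\phi):=\phi\circ M^{-1}$. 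Each arrow is an isomorphism, so $\delta_M$ is as well, and explicitly $\delta_M(a)=(\Phi_1^{-1}(a)\circ M^{-1})(H_2)$.

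The only substantive obstacle is confirming that the hypothesis $\det M\in R^{\times}$ (rather than merely $\det M\ne 0$) is exactly what is needed for $M$ to be invertible as an $R$-module map: over an integral domain $R$, the condition $\det M\ne 0$ alone already delivers injectivity, but surjectivity requires the determinant to be a unit. Once $M$ is known to be an $R[G]$-module isomorphism, the remainder of the argument is a formal consequence of the functoriality of $\Hom_{R[G]}(-,A)$ and Frobenius reciprocity.
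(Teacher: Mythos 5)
Your argument is correct and complete. The paper itself gives no proof, deferring to Klingen's \emph{Arithmetical Similarities} (Theorem IV.1.6a); the route you take --- identifying $A^{H_i}$ with $\Hom_{R[G]}(R[H_i\backslash G],A)$ via Frobenius reciprocity, checking that $\det M\in R^\times$ makes $M$ an $R[G]$-module isomorphism by the adjugate, and then transporting along the contravariant functor $\Hom_{R[G]}(-,A)$ --- is the standard one and is essentially what underlies Klingen's treatment, so there is nothing substantively different to compare. Your observation distinguishing $\det M\neq 0$ (injectivity after passing to the fraction field) from $\det M\in R^\times$ (invertibility over $R$) correctly isolates why the unit hypothesis is the right one.

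Two cosmetic points. First, when $[G:H_1]\neq[G:H_2]$ the reduction is not that ``$M=0$'' but that the map corresponds to a non-square matrix, for which the paper's convention sets $d(H_1,H_2)=0$; in either reading the hypothesis $\det M\in R^\times$ cannot hold in an integral domain, so the case is indeed vacuous as you say. Second, your construction yields an $R$-module isomorphism $A_1\to A_2$, which is what is actually meant: the subspaces $A^{H_1}$ and $A^{H_2}$ are not $G$-stable in general, so the phrase ``$R[G]$-module isomorphism'' in the statement should be read as ``$R$-module isomorphism'' (this is how the result is applied, e.g.\ to class groups as fixed submodules of the ideal class group of the Galois closure). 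Neither point affects the validity of your proof.
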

\begin{proof}
See \cite[Theorem IV.1.6a]{Klingen}.
\end{proof}
\section{Stronger forms of arithmetic equivalence}

Recall that a finite group $K$ is said to be \emph{cyclic modulo} $p$ (or $p$-\emph{hypo-elementary}) if the quotient of $K$ by the intersection of its $p$-Sylow subgroups (its $p$-core) is cyclic.  For the sake of brevity we shall simply call such a group \emph{$p$-cyclic}.  The class of $p$-cyclic groups includes all $p$-groups and all cyclic groups.

\begin{proposition}\label{prop:localequiv}
Let $G$ be a finite group and $p$ a prime.  For $H_1,H_2\le G$ the following are equivalent:
\begin{enumerate}[{\rm (i)}]
\setlength{\itemsep}{-2pt}
\item There is a $G$-class preserving bijection between the sets of $p$-cyclic subgroups of $H_1$ and $H_2$;
\item $\chi_{H_1}(K)=\chi_{H_2}(K)$ for every $p$-cyclic $K\le G$;
\item the $G$-sets $[H_1\backslash G]$ and $[H_2\backslash G]$ are isomorphic as $K$-sets for every $p$-cyclic $K\le G$;
\item $\Z_p[H_1\backslash G]\simeq \Z_p[H_2\backslash G]$;
\item $\F_p[H_1\backslash G]\simeq \F_p[H_2\backslash G]$;
\item $p\nmid d(H_1,H_2)$.
\end{enumerate}
Moreover, in (ii) and (iii) one can replace ``$K\le G$'' with ``$K\le H_1$ or $K\le H_2$''.
\end{proposition}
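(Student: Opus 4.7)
The plan is to organize the six conditions into three groups---the combinatorial conditions (i)--(iii), the module-theoretic conditions (iv), (v), and the determinant condition (vi)---and then tie them together using Proposition~\ref{prop:pred}, an approximation argument, and Conlon's theorem on permutation modules.

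I would first verify that the class $\cP$ of $p$-cyclic groups is subgroup-closed. If $K$ is $p$-cyclic with $p$-core $O_p(K)$ and $H\le K$, then $H\cap O_p(K)$ is a normal $p$-subgroup of $H$, so is contained in $O_p(H)$, and $H/(H\cap O_p(K))$ embeds in the cyclic group $K/O_p(K)$; hence $H/O_p(H)$, being a quotient of this, is cyclic. Applying Proposition~\ref{prop:pred} to $\cP$ then yields (i)$\Leftrightarrow$(ii)$\Leftrightarrow$(iii) together with the ``$K\le H_1$ or $K\le H_2$'' strengthening at the end of the proposition.

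For (iv)$\Leftrightarrow$(vi), the direction (vi)$\Rightarrow$(iv) is immediate, since if $p\nmid\det M$ then $M$ becomes invertible after base change to $\Z_p$. Conversely, using the identification
\[
\Hom_{\Z_p[G]}(\Z_p[H_1\backslash G],\Z_p[H_2\backslash G])=\Z_p\otimes_\Z\Hom_{\Z[G]}(\Z[H_1\backslash G],\Z[H_2\backslash G]),
\]
one can write a $\Z_p[G]$-isomorphism $\tilde M$ as a $\Z_p$-linear combination of integral homomorphisms, truncate its $\Z_p$-coefficients modulo a large power of $p$, and obtain an integral $M$ whose determinant agrees with $\det\tilde M\in\Z_p^\times$ to that accuracy, hence is coprime to $p$. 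The equivalence (iv)$\Leftrightarrow$(v) is a standard consequence of Nakayama's lemma and idempotent lifting over the complete local group algebra $\Z_p[G]$: two permutation $\Z_p[G]$-lattices are isomorphic if and only if their reductions modulo $p$ are isomorphic as $\F_p[G]$-modules.

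The main obstacle, and the real content of the proposition, is the equivalence (iii)$\Leftrightarrow$(v), which I would establish via Conlon's theorem on permutation modules over $p$-hypoelementary groups. For (v)$\Rightarrow$(iii), restricting an $\F_p[G]$-isomorphism to any $p$-cyclic $K\le G$ produces an isomorphism of permutation $\F_p[K]$-modules; Conlon's theorem asserts that for $p$-hypoelementary $K$ the Krull--Schmidt decomposition of such a permutation module coincides with the orbit decomposition of the underlying $K$-set, so $[H_1\backslash G]$ and $[H_2\backslash G]$ must be isomorphic as $K$-sets. For (iii)$\Rightarrow$(v), condition (iii) gives $\chi_{H_1}(K)=\chi_{H_2}(K)$ for every $p$-cyclic $K\le G$, and by the dual form of Conlon's theorem---namely, that the $\F_p[G]$-isomorphism class of a permutation module is determined by its marks on $p$-hypoelementary subgroups---we conclude $\F_p[H_1\backslash G]\simeq\F_p[H_2\backslash G]$. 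Locating the correct references for, and applying, these two directions of Conlon's theorem is where the work really lies; everything else is formal.
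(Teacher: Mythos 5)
Your proposal is correct and follows essentially the same route as the paper: Proposition~\ref{prop:pred} handles (i)--(iii) (the paper, like you, needs the class of $p$-cyclic groups to be subgroup-closed, and your verification of this is a worthwhile addition), Conlon's theorem is the one substantive external input bridging the combinatorial and module-theoretic conditions, and the remaining equivalences are elementary determinant and lifting arguments. The paper arranges the chain slightly differently---it cites Scott's Proposition~3.1 for (ii)$\Leftrightarrow$(iv), Guralnick--Wales for (iv)$\Leftrightarrow$(v), and obtains (v)$\Leftrightarrow$(vi) by lifting an $\F_p[G]$-isomorphism to an integral $M$ with $p\nmid\det M$, whereas you place Conlon between (iii) and (v) and prove (iv)$\Leftrightarrow$(vi) by $p$-adic approximation---but these are cosmetic rearrangements of the same argument.

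One statement in your write-up is wrong as written, though it does not affect the logic. Conlon's theorem for a $p$-hypoelementary group $K$ does \emph{not} say that the Krull--Schmidt decomposition of a permutation $\F_p[K]$-module coincides with the orbit decomposition of the underlying $K$-set: the transitive modules $\F_p[H\backslash K]$ are usually decomposable (take $K$ cyclic of order prime to $p$ and $H$ trivial). What it does say is that the map from the Burnside ring of $K$ to the Green ring is injective, i.e., $\F_p[X]\simeq\F_p[Y]$ forces $X\simeq Y$ as $K$-sets; that is the statement you actually use, so your conclusion stands. Similarly, $\Z_p[G]$ is complete but not local; the lifting step in (v)$\Rightarrow$(iv) really rests on the fact that $\Hom_{\Z[G]}(\Z[H_1\backslash G],\Z[H_2\backslash G])$ has a double-coset basis compatible with base change, so every $\F_p[G]$-homomorphism of these permutation modules lifts, and a lift of an isomorphism has unit determinant.
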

\begin{proof}
The equivalence of (i), (ii), (iii) is given by Proposition~\ref{prop:pred}.
The equivalence of (ii) and (iv) follows from \cite[Proposition~3.1]{Scott92} (attributed to Conlon \cite{Conlon}).
The equivalence of (iv) and (v) is given by \cite[Theorem~2.9(i)]{GW}.
The equivalence of (v) and (vi) is immediate, since $\F_p[H_1\backslash G]\simeq \F_p[H_2\backslash G]$ if and only if there exists $M\in \Hom_{\Z[G]}(\Z[H_1\backslash G],\Z[H_1\backslash G])$ whose reduction modulo $p$ is invertible, equivalently, $p\nmid \det M)$.
That the weakened forms of (ii) and (iii) suffice follows form Proposition~\ref{prop:pred}
\end{proof}

\begin{definition}
Let $H_1,H_2\le G$ be finite groups.  If $\Z_p[H_1\backslash G]\simeq \Z_p[H_2\backslash G]$ for every prime $p$ then $H_1$ and~$H_2$ are \emph{locally integrally equivalent}, and if $\Z[H_1\backslash G]\simeq \Z[H_2\backslash G]$ then they are \emph{integrally equivalent}.
\end{definition}

\begin{remark}
Two $\Z[G]$-modules that are isomorphic as $\Z_p[G]$-modules for every prime $p$ are said to \textit{lie in the same genus} \cite{GWe,Scott92}; subgroups $H_1,H_2\le G$ are locally integrally equivalent if and only of the permutation modules $\Z[H_1\backslash G]$ and $\Z[H_2\backslash G]$ lie in the same genus.
\end{remark}

Proposition~\ref{prop:localequiv} implies that subgroups $H_1,H_2\le G$ are locally integrally equivalent if and only if
\[
d(H_1,H_2)=\gcd \left\{\det M:M \in \Hom_{\Z[G]}(\Z[H_1\backslash G],\Z[H_2\backslash G])\right\}= 1,
\]
in which case there is a finite set of matrices $M\in \Hom_{\Z[G]}(\Z[H_1\backslash G],\Z[H_2\backslash G]$ whose determinants have trivial GCD.
Integral equivalence holds if and only if a singleton set with this property exists, that is, $\det M=\pm 1$ for some $M\in \Hom_{\Z[G]}(\Z[H_1\backslash G],\Z[H_2\backslash G])$.
Rational equivalence only requires $d(H_1,H_1)\ne 0$ and is obviously implied by local integral equivalence.

Essentially only one nontrivial example of integral equivalence is known, due to Scott \cite{Scott92}, in which $G\simeq \PSL_2(29)$ and $H_1$ and $H_2$ are nonconjugate subgroups of $G$ isomorphic to the alternating group~$A_5$ that are conjugate in $\PGL_2(29)$; one can use this example to construct others, but these all have a subgroup with a quotient isomorphic to $\PSL_2(29)$.
As noted by Scott and proved in Theorem~\ref{thm:solvequiv} below, for every prime $p\equiv \pm 29\bmod 120$ the group $\PSL_2(p)$ contains nonconjugate subgroups isomorphic to~$A_5$ that are locally integrally equivalent.  But with the exception of $p = 29$ it is not known whether these subgroups are also integrally equivalent.

\begin{proposition}\label{prop:localarithequiv}
Let $K_1$ and $K_2$ be number fields with common Galois closure $L$, and let $H_1:=\Gal(L/K_1)$, $H_2:=\Gal(L/K_2)$ be locally integrally equivalent subgroups of $G:=\Gal(L/\Q)$.
Then the following hold:
\begin{enumerate}[{\rm (i)}]
\setlength{\itemsep}{-2pt}
\item $K_1$ and $K_2$ are arithmetically equivalent;
\item the class groups of $K_1$ and $K_2$ are isomorphic;
\item the regulators of $K_1$ and $K_2$ are equal;
\item for every prime $p$ the products of the ramification indices of the primes of $K_1$ and $K_2$ above $p$ coincide.
\end{enumerate}
\end{proposition}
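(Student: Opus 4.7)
The plan is to apply Proposition~\ref{prop:arithiso} one prime at a time. By Proposition~\ref{prop:localequiv}(vi), local integral equivalence means that $p\nmid d(H_1,H_2)$ for every prime $p$, so for each $p$ there is an $M_p\in\Hom_{\Z[G]}(\Z[H_1\backslash G],\Z[H_2\backslash G])$ with $\det M_p\in\Z_p^{\times}$. Proposition~\ref{prop:arithiso} with $R=\Z_p$ then supplies canonical $\Z_p$-module isomorphisms $\delta_{M_p}\colon A^{H_1}\iso A^{H_2}$ for every $\Z_p[G]$-module $A$. Part~(i) is immediate: base-changing any $M_p$ to $\Q$ gives $\Q[H_1\backslash G]\simeq\Q[H_2\backslash G]$, so $(G,H_1,H_2)$ is a Gassmann triple and $K_1,K_2$ are arithmetically equivalent by Theorem~\ref{thm:gassmann}.

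For part~(ii), I would take $A$ to be a $\Z_p[G]$-module whose $H$-invariants functorially reproduce $\mathrm{Cl}(L^H)\otimes\Z_p$ for every $H\le G$; such a module is furnished by class field theory via the identity $C_L^H=C_{L^H}$ for the idele class group (the same $G$-module exploited by Prasad in the integrally equivalent case, now only required over $\Z_p$ prime by prime). The map $\delta_{M_p}$ then yields $\mathrm{Cl}(K_1)\otimes\Z_p\simeq\mathrm{Cl}(K_2)\otimes\Z_p$ for every $p$, and since $\mathrm{Cl}(K_i)$ is finite, the direct sum of these $p$-primary isomorphisms gives $\mathrm{Cl}(K_1)\simeq\mathrm{Cl}(K_2)$. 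Part~(iii) then follows from (i), (ii), Theorem~\ref{thm:arithequiv}, and the analytic class number formula \eqref{eq:acnf}: the residue at $s=1$, the signature, the discriminant, and the roots of unity all agree by arithmetic equivalence, and $h_{K_1}=h_{K_2}$ by (ii), forcing $R_{K_1}=R_{K_2}$.

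For part~(iv), fix a prime $p$ of $\Q$ and let $\p$ be a prime of $L$ above $p$ with decomposition group $D_\p$ and inertia group $I_\p$. Since wild inertia $I_\p^w$ is a $p$-group and tame inertia $I_\p/I_\p^w$ is cyclic, $I_\p$ is $p$-cyclic. By Proposition~\ref{prop:localequiv}, the $G$-sets $[H_1\backslash G]$ and $[H_2\backslash G]$ are isomorphic as $K$-sets for every $p$-cyclic $K\le G$. Restricting the $\Z_p[G]$-module isomorphism to $D_\p$ and combining with these $K$-set isomorphisms for all $p$-cyclic $K\le D_\p$, one obtains a bijection between the $D_\p$-orbits on $[H_1\backslash G]$ and $[H_2\backslash G]$---that is, between the primes of $K_1$ and $K_2$ above $p$---under which the $I_\p$-stabilizers of matched representatives have the same cardinality. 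By the decomposition Lemma of Section~2, the ramification index $e_\p$ of each such prime equals $[I_\p:\mathrm{Stab}_{I_\p}(x)]$, so matched ramification indices agree and $\prod_{\p|p}e_\p$ coincides for $K_1$ and $K_2$.

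The main obstacle lies in part~(iv): extracting the compatibility of $I_\p$-stabilizers on matched $D_\p$-orbits from the $\Z_p[D_\p]$-permutation-module isomorphism. This is strictly stronger than the $\Q[D_\p]$-character equality that rational equivalence supplies; the insufficiency of mere character equality is witnessed by Perlis's example $\Q(\sqrt[8]{97})$ versus $\Q(\sqrt[8]{1552})$ of Remark~\ref{rem:ramdiff}, where the products $8$ and $16$ differ despite arithmetic equivalence. Part~(ii) is largely a localization of Prasad's idele-class-group argument, but still requires care in selecting the $\Z_p[G]$-module so that $\delta_{M_p}$ is compatible with the quotient defining the class group.
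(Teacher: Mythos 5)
Parts (i) and (iii) follow the same route as the paper (rational equivalence plus Theorem~\ref{thm:gassmann}, then the analytic class number formula), and your outline for (ii) is a prime-by-prime localization of Prasad's idele-class-group argument where the paper simply invokes Proposition~\ref{prop:localequiv} together with \cite[Theorem~3]{Perlis78}; the ``care in selecting the $\Z_p[G]$-module'' that you flag is precisely the content of Perlis's theorem (the class group is a quotient, not a submodule, of the idele class group, so $\delta_{M_p}$ does not descend for free), and as written (ii) is an honest sketch rather than a proof.

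The genuine problem is part (iv). You claim a bijection between the primes of $K_1$ and $K_2$ above $p$ ``under which the $I_\p$-stabilizers of matched representatives have the same cardinality,'' i.e.\ a bijection matching ramification indices one-to-one, and then deduce equality of the products. That intermediate claim is strictly stronger than (iv) and is \emph{false} for locally integrally equivalent fields: it is refuted by the paper's own example in \S\ref{sec:locnonlociso}, where $H_1$ and $H_2$ are locally integrally equivalent but $2\O_{K_1}$ has four primes with $e=1$, $f=1$ while $2\O_{K_2}$ has none (its only unramified primes above $2$ have $f=2$), so no bijection of the primes above $2$ can preserve ramification indices even though the products ($6^{14}$ on both sides) agree. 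The gap in the argument is that the $I_\p$-set isomorphism $[H_1\backslash G]\simeq[H_2\backslash G]$ only matches the \emph{total} multiset of $I_\p$-orbit sizes, which is $\prod_{O} e_O^{\hairspace f_O}$ worth of information, whereas the product of ramification indices is $\prod_O e_O$ with one factor per $D_\p$-orbit $O$; there is no reason the $I_\p$-set bijection and the $D_\p$-orbit bijection can be chosen compatibly, and in general they cannot. The paper instead deduces (iv) from $d(H_1,H_2)=1$ via \cite[Theorem~IV.2.3]{Klingen}, which bounds the ratio of the two products by divisors of $d(H_1,H_2)$; some input of this kind (beyond orbit counting) is genuinely needed.
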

\begin{proof}
As noted above, local integral equivalence implies rational equivalence, so (i) follows from Proposition~\ref{prop:gassmann} and Theorem~\ref{thm:gassmann}.
Proposition~\ref{prop:localequiv} and \cite[Theorem~3]{Perlis78} together imply that the class groups of $K_1$ and $K_2$ have isomorphic $p$-Sylow subgroups for every prime $p$ and are therefore isomorphic (since they are abelian), so (ii) holds.
Properties (i) and (ii) together imply (iii), by Theorem~\ref{thm:arithequiv} and the analytic class number formula.
Local integral equivalence implies $d(H_1,H_2)=1$, which when combined with \cite[Theorem~IV.2.3]{Klingen} implies (iv).
\end{proof}

For number fields satisfying the hypothesis of Proposition~\ref{prop:localarithequiv}, all the quantities that appear in the analytic class number formula \eqref{eq:acnf} must coincide.
However, such fields need not be locally isomorphic, as shown by the example in \S\ref{sec:locnonlociso}, and locally isomorphic number fields may have different class numbers and regulators, as shown by the example in Remark~\ref{rem:localisoclassnum}.

We now introduce a strictly stronger notion of equivalence that implies both local integral equivalence and local isomorphism of  corresponding number fields.

\begin{definition}\label{def:solvequiv}
Subgroups $H_1$ and $H_2$ of a finite group $G$ are \emph{solvably equivalent} if they satisfy the following equivalent properties (as guaranteed by Proposition~\ref{prop:pred}):
\begin{enumerate}[{\rm (i)}]
\setlength{\itemsep}{-2pt}
\item There is a $G$-class preserving bijection between the sets of solvable subgroups of $H_1$ and $H_2$;
\item $\chi_{H_1}(K)=\chi_{H_2}(K)$ for every solvable $K\le G$;
\item the $G$-sets $[H_1\backslash G]$ and $[H_2\backslash G]$ are isomorphic as $K$-sets for every solvable $K\le G$.
\end{enumerate}
\end{definition}

Solvably equivalent subgroups are always locally integrally equivalent, since $p$-cyclic groups are solvable, but as demonstrated by the example in \S\ref{sec:d32}, locally integrally equivalent subgroups need not be solvably equivalent.
As shown by the example in \S\ref{sec:d96}, solvably equivalent subgroups need not be integrally equivalent, but it is not clear whether the converse holds; the integrally equivalent subgroups of $\PSL_2(29)$ in Scott's example are solvably equivalent, but as noted in the introduction, it is not clear whether this is always true, nor is it clear that integral equivalence guarantees local isomorphism of corresponding number fields (this is not true of local integral equivalence, and if it were true for integral equivalence then property~(2) in Theorem 3 in \cite{Prasad} could have been included in Theorem 2 in \cite{Prasad}).

\begin{question}\label{quest:intsolv}
Is there a Gassmann triple $(G, H_1, H_2)$ in which $H_1$ and $H_2$ are integrally equivalent but not solvably equivalent?
More precisely, is there a group $G$ containing subgroups $H_1,H_2$ and a solvable subgroup $K$ such that $\Z[H_1\backslash G]$ and $\Z[H_2\backslash G]$ are isomorphic as $\Z[G]$-modules but not as $K$-sets?
\end{question}

\begin{proposition}\label{prop:solvablearithequiv}
Let $K_1$ and $K_2$ be number fields with the same Galois closure $L$, and put $H_1:=\Gal(L/K_1)$ and $H_2:=\Gal(L/K_2)$.
If $H_1$ and $H_2$ are solvably equivalent subgroups of $G:=\Gal(L/\Q)$ then
\begin{enumerate}[{\rm (i)}]
\setlength{\itemsep}{-2pt}
\item $K_1$ and $K_2$ are arithmetically equivalent;
\item $K_1$ and $K_2$ have isomorphic class groups and equal regulators;
\item $K_1$ and $K_2$ are locally isomorphic, and in particular there is a bijection between the primes of $K_1$ and~$K_2$ that preserves both inertia degrees and ramification indices;
\item the adele rings $\A_{K_1}$ and $\A_{K_2}$ are isomorphic (as topological groups and $\A_\Q$-algebras);
\end{enumerate}
\end{proposition}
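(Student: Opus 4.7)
The plan is to reduce each part to results already established in the excerpt, using two inputs: solvable equivalence implies local integral equivalence (so Proposition~\ref{prop:localarithequiv} applies), and decomposition groups of primes in a Galois extension of $\Q$ are solvable (so Definition~\ref{def:solvequiv}(iii) applies with $K=D_\p$).

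For (i) and (ii), I would observe that every $p$-cyclic group is solvable, hence any $G$-class preserving bijection between solvable subgroups of $H_1$ and $H_2$ restricts to one between $p$-cyclic subgroups. By Proposition~\ref{prop:localequiv} this makes $H_1$ and $H_2$ locally integrally equivalent, and Proposition~\ref{prop:localarithequiv} then yields arithmetic equivalence, isomorphism of class groups, and equality of regulators.

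For (iii), I would argue one place of $\Q$ at a time. For each place $p\le\infty$ pick a place $\p$ of $L$ above $p$ with decomposition group $D_\p\le G$. At archimedean $p$, the group $D_\p$ has order $1$ or $2$ and is trivially solvable. At non-archimedean $p$, $D_\p$ fits into $1\to I_\p\to D_\p\to \Gal(\overline{k(\p)}/k(p))\to 1$ with cyclic quotient, and $I_\p$ is itself an extension of a cyclic tame quotient by a pro-$p$ wild inertia subgroup, so $D_\p$ is solvable. By Definition~\ref{def:solvequiv}(iii) applied to $K=D_\p$, the $G$-sets $[H_1\backslash G]$ and $[H_2\backslash G]$ are isomorphic as $D_\p$-sets. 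Proposition~\ref{prop:decomp} then gives $K_1\otimes_\Q\Q_p\simeq K_2\otimes_\Q\Q_p$, so $K_1$ and $K_2$ are locally isomorphic. Moreover, under the lemma preceding the definition of local isomorphism, $D_\p$-orbits on $[H_i\backslash G]$ correspond to primes of $K_i$ above $p$, with the ramification index and inertia degree of each such prime recorded by the $D_\p$-stabilizer (up to $D_\p$-conjugacy). A $D_\p$-equivariant bijection of the $G$-sets matches orbits with $D_\p$-conjugate stabilizers and therefore produces the required bijection of primes preserving ramification indices and inertia degrees.

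Part (iv) is then immediate from Theorem~\ref{thm:localiso}(ii): local isomorphism at every place is equivalent to an isomorphism of adele rings as topological $\A_\Q$-algebras. The only real content, aside from citing earlier statements, is the solvability of decomposition groups at non-archimedean places, which is a standard consequence of ramification theory; once it is in hand the solvable equivalence hypothesis mechanically supplies the required local isomorphisms.
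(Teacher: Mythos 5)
Your proposal is correct and follows the same route as the paper: (i) and (ii) via local integral equivalence and Proposition~\ref{prop:localarithequiv}, (iii) via solvability of decomposition groups combined with Proposition~\ref{prop:decomp}, and (iv) via Theorem~\ref{thm:localiso}. The only difference is that you spell out the standard ramification-theoretic argument for solvability of $D_\p$ and the orbit--stabilizer bookkeeping for the prime bijection, which the paper takes as known.
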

\begin{proof}
Solvable equivalence implies local integral equivalence, so (i) and (ii) both follow from Proposition~\ref{prop:localarithequiv}.
For each prime $\p$ of $L$ the decomposition subgroup $D_\p\subseteq \Gal(L/\Q)$ is solvable, so we have an isomorphism of $D_\p$-sets $[H_1\backslash G]\simeq [H_2\backslash G]$, which implies (iii), by Proposition~\ref{prop:decomp}, and (iv) is then implied by Theorem~\ref{thm:localiso}.  
\end{proof}

\begin{remark}
In Proposition~\ref{prop:solvablearithequiv}, the hypothesis that $H_1$ and $H_2$ are solvably equivalent is stronger than necessary.  It could be replaced, for example, by the condition that $\chi_{H_1}(K)=\chi_{H_2}(K)$ for every $K\le G$ with normal subgroups $W\le I$ such that $W$ is a $p$-group, $I/W$ is cyclic of order prime to $p$, and $K/I$ is cyclic.
Even this is stronger than necessary, since, for example, it is satisfied by both $C_2^4$ and $\SL_2(3)$, neither of which occurs as the Galois group of an extension of $\Q_p$ for any prime $p$ (the former contains too many normal subgroups of index 2 and the latter was ruled out by Weil in \cite[\S 15]{Weil}).
\end{remark}

The following theorem gives an infinite family of groups each of which contain a pair of nonconjugate solvably equivalent subgroups.

\begin{theorem}\label{thm:solvequiv}
Let $p\equiv \pm 29 \bmod 120$ be prime.
The group $\SL_2(\F_p)$ contains a pair of nonconjugate solvably equivalent subgroups $H_1, H_2$ whose projective images are nonconjugate solvably equivalent subgroups of $\PSL_2(\F_p)$ isomorphic to the alternating group $A_5$.
\end{theorem}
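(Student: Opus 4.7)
The plan is to let $H_1, H_2$ be the full preimages in $\SL_2(\Fp)$ of the two $\PSL_2(\Fp)$-conjugacy classes of $A_5$ subgroups of $\PSL_2(\Fp)$ provided by Dickson's classification (using $p \equiv \pm 1 \pmod 5$, which is entailed by $p \equiv \pm 29 \pmod{120}$), and to deduce their solvable equivalence from a fusion analysis of solvable subgroups of $\PSL_2(\Fp)$ under the outer action of $\PGL_2(\Fp)$. The two $\PSL_2$-classes are swapped by any $\sigma \in \PGL_2(\Fp) \setminus G$, where $G := \PSL_2(\Fp)$; fix representatives $\bar H_1, \bar H_2$ with $\sigma \bar H_1 \sigma^{-1} = \bar H_2$, and set $H_i := \pi^{-1}(\bar H_i)$ under the central quotient $\pi \colon \SL_2(\Fp) \to G$. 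Each $H_i$ is isomorphic to the binary icosahedral group $2.A_5$ (the unique nontrivial central extension of $A_5$ by $C_2$ admitting a faithful $2$-dimensional representation), and $H_1, H_2$ are not $\SL_2(\Fp)$-conjugate since their projections are not $G$-conjugate.

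To prove solvable equivalence of $\bar H_1, \bar H_2$ in $G$, by Definition~\ref{def:solvequiv}(ii) it suffices to show $\chi_{\bar H_1}(K) = \chi_{\bar H_2}(K)$ for every solvable $K \le G$. Formula~\eqref{eq:chiHK} reduces this to matching $\#\{g K g^{-1} \le \bar H_i : g \in G\}$ for $i = 1, 2$; substituting $g \mapsto \sigma g$ and using the normality of $G$ in $\PGL_2(\Fp)$ reduces it further to the \emph{fusion claim}: for every solvable $K \le G$, the $G$-conjugacy class of $K$ equals its $\PGL_2(\Fp)$-conjugacy class. Only $K$ isomorphic to a proper subgroup of $A_5$---that is, $\{1\}$, $C_2$, $C_3$, $V_4$, $C_5$, $S_3$, $D_{10}$, or $A_4$---needs to be considered, since both character values vanish otherwise.

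The main obstacle and technical heart of the proof is the fusion claim, which I would verify case by case by computing $|N_G(K)|$ and $|N_{\PGL_2(\Fp)}(K)|$ and checking that their ratio equals $[\PGL_2 : \PSL_2] = 2$, which by orbit counting forces each $\PGL_2$-orbit to be a single $G$-orbit. The congruence $p \equiv \pm 3 \pmod 8$ arising from $p \equiv \pm 29 \pmod{120}$ is essential here: it excludes $S_4 \le G$ while $S_4 \le \PGL_2(\Fp)$ holds for all odd $p$ by Dickson, so $N_G(A_4) = N_G(V_4) = A_4$ while $N_{\PGL_2}(A_4) = N_{\PGL_2}(V_4) = S_4$, giving ratio $2$; for cyclic and dihedral types the extra factor of $2$ comes from the unique involution of the maximal torus of $\PGL_2(\Fp)$, which lies outside $G$ because the corresponding subtorus of $G$ has odd order under our congruence. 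With the fusion claim in hand, solvable equivalence of $\bar H_1, \bar H_2$ in $G$ follows, and transfers to $H_1, H_2$ in $\SL_2(\Fp)$ via the identity $\chi_{H_i}(K) = \chi_{\bar H_i}(\pi(K))$ for every $K \le \SL_2(\Fp)$, which holds because $\pi$ is $2$-to-$1$ with $|H_i| = 2|\bar H_i|$ and because $\pi(K)$ is solvable whenever $K$ is.
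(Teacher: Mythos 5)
Your proposal is correct and follows essentially the same route as the paper: both arguments hinge on the two $A_5$-classes being interchanged by the outer ($\PGL_2/\PSL_2$) action together with the fact that, under the congruences packaged in $p\equiv\pm 29\bmod 120$, each solvable subgroup type occurring in $A_5$ (the cyclic groups, $D_2$, $S_3$, $D_{10}$, $A_4$) forms a single conjugacy class stable under that action. The paper establishes this stability by counting $\SL_2(\Fp)$-classes of $2D_2,2D_3,2D_5,2A_4$ via Proposition~\ref{lem:sl2subgroups} (after disposing of cyclic subgroups through rational equivalence), whereas you phrase it as a normalizer-index computation in $\PGL_2(\Fp)$ and then lift from $\PSL_2$ to $\SL_2$ rather than descending; these are equivalent formulations of the same fusion fact, so the difference is organizational rather than substantive.
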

\begin{proof}
It follows from \cite[Lemma 3.21.3c]{Sutherland} that for $p\equiv \pm 1\bmod 5$, up to conjugacy in $\GL_2(\Fp)$ there is a unique subgroup $H_1$ of $\SL_2(\Fp)$ with projective image isomorphic to $A_5$; it is isomorphic to $\SL_2(\F_5)$.  The outer automorphism of $\SL_2(\F_p)$ corresponds to conjugation by an element with nonsquare determinant; let $\sigma\coloneqq \smat{r}{0}{0}{1}$ be such an element, with $r\in \Fp^\times - \Fp^{\times 2}$.
Conjugation by $\sigma$ fixes all but four of the conjugacy classes in $\SL_2(\Fp)$: it interchanges the conjugacy classes of $\smat{1}{1}{0}{1}$ and $\smat{1}{r}{0}{1}$, and also those of $\smat{-1}{-1}{0}{-1}$ and $\smat{-1}{-r}{0}{-1}$ (these are the conjugacy classes of elements of order divisible by $p$).

Let $H_2\coloneqq \sigma H_1\sigma^{-1}$; the groups $H_1$ and $H_2$ are not conjugate in $\SL_2(\Fp)$, by \cite[Theorem~4.1]{Flicker}.  These groups do not contain any elements of order divisible by $p$, since $p\ge 29$ and $\#\SL_2(\F_5)=2^2\cdot3\cdot5$.   Conjugation by $\sigma$ thus defines an $\SL_2(\Fp)$-conjugacy preserving bijection between $H_1$ and $H_2$, implying that $H_1$ and $H_2$ are rationally equivalent subgroups of $\SL_2(\Fp)$.

To show that $H_1$ and $H_2$ are solvably equivalent, it suffices to show that $\sigma$ defines an $\SL_2(\Fp)$-conjugacy class preserving bijection of solvable subgroups of $H_1$ and $H_2$, and having proved rational equivalence we only need to consider the noncyclic solvable subgroups of $H_1$ and $H_2$. Up to isomorphism, there are four possibilities for the image of such a subgroup in in $\PSL_2(\Fp)$: $D_2$, $D_3$, $D_5$, and $A_4$, where $D_2\coloneqq C_2\times C_2$ is the Klein group.
It follows from Proposition~\ref{lem:sl2subgroups} below that there is exactly one $\SL_2(\Fp)$-conjugacy class of subgroups isomorphic to $D_2$, $D_3$, $D_5$, $A_4$ when $p\equiv \pm 3\bmod 8$, $p\equiv \pm 5 \bmod 12$, $p\equiv \pm 9\bmod 20$, and $p\equiv\pm 3\bmod 8$, respectively.
These constraints are simultaneously met precisely when $p\equiv \pm 29\bmod 120$, and in this situation it is clear that $\sigma$ must define an $\SL_2(\Fp)$-conjugacy class preserving bijection of solvable subgroups of $H_1$ and $H_2$, since it preserves isomorphism classes.

Finally, note that the conjugacy class preserving bijection between solvable subgroups of $H_1$ and $H_2$ descends to $\PSL_2(\Fp)$, while $H_1$ and $H_2$ both contain $-1$ and remain nonconjugate in $\PSL_2(\Fp)$.
\end{proof}

\begin{remark}
Theorem~\ref{thm:solvequiv} accounts for all nontrivial pairs of solvably equivalent subgroups of $\SL_2(\F_p)$, in fact all nontrivial pairs of locally integrally equivalent subgroups of $\SL_2(\F_p)$, as noted by Scott~\cite{Scott92}.  Up to a central extension the same applies to subgroups of $\GL_2(\Fp)$, since every nonsolvable subgroup of $\GL_2(\Fp)$ that does not contain $\SL_2(\Fp)$ has projective image $A_5$ \cite[\S 2]{Serre}.
\end{remark}

\begin{remark}
As proved by Zywina \cite{Zywina}, the group $\PSL_2(\Fp)$ can be realized as the Galois group of a number field for every prime $p$.
This implies that there are infinitely many distinct examples of pairs of nonisomorphic solvably equivalent number fields whose Galois groups do not admit a common quotient.
\end{remark}

\begin{remark}
As shown in \S\ref{sec:d96}, subgroups of $\SL_2(\Fp)$ are not the only source of nontrivial solvably equivalent pairs of subgroups, and one can do better than the minimal degree 203 admitted by Theorem~\ref{thm:solvequiv}: degree 96 is possible.
\end{remark}

Recall that each subgroup of $\GL_2(\Fp)$ of order prime to $p$ can be classified according to the isomorphism class of its image in $\PGL_2(\Fp)$, which must be cyclic, dihedral, or one of $A_4$, $S_4$, $A_5$; see \cite[\S 2]{Serre}, for example. Note that we consider $D_2:= C_2\times C_2$ to be a dihedral group.  The proposition below characterizes the isomorphism classes of order prime to $p$ that arise in $\SL_2(\Fp)$, up to conjugacy in $\SL_2(\Fp)$; see \cite[\S 3]{Sutherland} for an analogous classification for conjugacy classes of subgroups of $\GL_2(\Fp)$ (including those of order divisible by $p$), which we will use in the proof of the proposition.

We use the notation $2D_n$ to denote the binary dihedral group of order $4n$, these arise as subgroups of $\SL_2(\Fp)$ containing $-1$ with projective image $D_n$, and similar define $2A_4$, $2S_4$, $2A_5$. We say that a conjugacy class of subgroups of $\SL_2(\Fp)$ is $C_n$ (resp. $2D_n$, $2A_4$, $2S_4$, $2A_5$) if it is the conjugacy class of a subgroup isomorphic to $C_n$ (resp. $2D_n$, $2A_4$, $2S_4$, $2A_5$).

\begin{proposition}\label{lem:sl2subgroups}
Let $p>3$ be prime, and let $S$ be the set of integers that divide either $p-1$ or $p+1$.
Up to conjugacy in $\SL_2(\Fp)$ the subgroups of $\SL_2(\Fp)$ of order prime to $p$ are as follows:
\begin{itemize}
\setlength{\itemsep}{-2pt}
\item For each integer $n\ge 1$ with $p\equiv \pm 1 \bmod n$, a single conjugacy class $C_n$.
\item For each integer $2n>2$ with $p\equiv \pm 1 \bmod 4n$, two conjugacy classes $2D_n$.
\item For each integer $2n>2$ with $p\equiv \pm 1 \bmod 2n$ and $p\not\equiv\pm 1\bmod 4n$, a single conjugacy class $2D_n$.
\item Two conjugacy classes $2A_4$ if $p\equiv \pm 1\bmod 8$ and one otherwise.
\item Two conjugacy classes $2S_4$ if $p\equiv \pm 1\bmod 8$ and none otherwise.
\item Two conjugacy classes $2A_5\simeq \SL(2,5)$ if $p\equiv \pm 1\bmod 5$ and none otherwise.
\end{itemize}
\end{proposition}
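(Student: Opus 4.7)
The plan is to reduce to the Dickson classification of $\PSL_2(\Fp)$-subgroups, lift to $\SL_2(\Fp)$ via a double-cover analysis, and then descend $\GL_2(\Fp)$-conjugacy to $\SL_2(\Fp)$-conjugacy using normalizer computations.

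The key observation is that $-I$ is the unique involution in $\SL_2(\Fp)$: any $A$ with $A^2=I$ and $\det A=1$ has eigenvalues in $\{\pm 1\}$ multiplying to $+1$, forcing both equal and $A=\pm I$. Hence every even-order subgroup of $\SL_2(\Fp)$ contains $-I$ and equals the full preimage of its projective image $\bar H \le \PSL_2(\Fp)$; odd-order subgroups project isomorphically. Combined with Dickson's classification of $\PSL_2(\Fp)$-subgroups, the isomorphism types of $\SL_2(\Fp)$-subgroups of order prime to $p$ are precisely $C_n$, $2D_n$, and the binary polyhedrals $2A_4\simeq\SL(2,3)$, $2S_4\simeq 2O$, $2A_5\simeq\SL(2,5)$. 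Each type contains a distinguished cyclic subgroup whose embedding in $\SL_2(\Fp)$ is necessary and (by explicit torus construction) sufficient: $C_n$ itself; $C_{2n}\subset 2D_n$; $C_8\subset 2S_4$; $C_{10}\subset 2A_5$. Since $\SL_2(\Fp)$ contains $C_m$ iff $m\mid p-1$ or $m\mid p+1$ (realized in the split or nonsplit torus), the stated existence conditions follow, with $2A_4$ always present for $p>3$.

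To pass from $\GL_2$- to $\SL_2$-conjugacy, the number of $\SL_2(\Fp)$-classes inside a $\GL_2(\Fp)$-class of $H$ equals $[\GL_2(\Fp):\SL_2(\Fp)\cdot N_{\GL_2(\Fp)}(H)]\in\{1,2\}$, equal to $2$ iff $\det N_{\GL_2(\Fp)}(H)=\Fp^{\times 2}$. For $C_n$ the normalizer contains a split or nonsplit $\GL_2$-torus whose determinant is all of $\Fp^\times$, giving a single class. For $\bar H\in\{S_4,A_5\}$, Dickson forces $N_{\PGL_2(\Fp)}(\bar H)=\bar H$ (since $\Aut(S_4)=S_4$ is all inner, and $\Aut(A_5)=S_5$ is not a subgroup of $\PGL_2(\Fp)$), so $N_{\GL_2(\Fp)}(H)=H\cdot Z(\GL_2(\Fp))$ has $\det=\Fp^{\times 2}$, giving $2$ classes whenever $2S_4$ or $2A_5$ exists. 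For $\bar H=A_4$ one has $N_{\PGL_2(\Fp)}(A_4)=S_4$; its lift to $\GL_2(\Fp)$ is $2S_4$ when $p\equiv\pm 1\bmod 8$ (giving a square-determinant outer conjugator and $2$ classes) and otherwise a nonsquare-determinant extension (giving a single class). For $\bar H=D_n$ with $n\ge 2$, the parallel analysis via $N_{\PGL_2(\Fp)}(D_n)\supset D_{2n}$ yields splitting exactly when $2D_{2n}\subset\SL_2(\Fp)$, i.e.\ $p\equiv\pm 1\bmod 4n$.

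The main obstacle will be the last step: writing down explicit matrix realizations of $2D_n$, $2S_4$, and $2A_5$ in $\SL_2(\Fp)$ (using $\sqrt 2$, $\sqrt{-1}$, and fifth roots of unity), and verifying that the outer conjugator normalizing $A_4$ or $D_n$ has square determinant exactly when its enveloping binary polyhedral or binary dihedral group lies in $\SL_2(\Fp)$. This reduces to checking that the requisite cyclic subgroup of doubled order is present, controlled by divisibility of $8$ or $4n$ into $p\pm 1$; the verification is a finite case analysis but requires careful bookkeeping of normalizers and determinants.
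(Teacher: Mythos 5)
Your plan is sound and reaches the stated conclusions, but it proceeds quite differently from the paper, whose proof is essentially an assembly of citations: it imports the classification up to $\GL_2(\Fp)$-conjugacy from Lemmas 3.13, 3.16, 3.18 and 3.21 of \cite{Sutherland}, and then invokes \cite[Theorem~4.1]{Flicker} (corrected by Remark~\ref{rem:flickererratum}) to decide when a $\GL_2(\Fp)$-class splits into two $\SL_2(\Fp)$-classes. You instead rederive the splitting criterion from first principles: the observation that $-I$ is the unique involution (so every even-order subgroup is the full preimage of its projective image), Dickson's classification for the isomorphism types, and the double-coset identity giving $[\Fp^\times:\det N_{\GL_2(\Fp)}(H)]$ as the number of $\SL_2(\Fp)$-classes in a $\GL_2(\Fp)$-class --- which, since $Z(\GL_2(\Fp))\le N_{\GL_2(\Fp)}(H)$ forces $\det N_{\GL_2(\Fp)}(H)\supseteq\Fp^{\times 2}$, is indeed $1$ or $2$ as you claim. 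This is exactly the mechanism underlying Flicker's theorem, so your route buys self-containedness and an explanation of \emph{why} the splitting is governed by the existence of the enveloping group ($2D_{2n}$, $2S_4$), at the cost of having to carry out the normalizer and determinant bookkeeping and the explicit existence constructions that you defer to the final paragraph; the paper's route is shorter but opaque without the references. Two points to watch if you execute the plan: first, your framework presupposes that each type is unique up to $\GL_2(\Fp)$-conjugacy (the content of the cited lemmas of \cite{Sutherland}); this does follow from Dickson plus torus arguments but needs to be stated and proved, not just assumed. Second, for $n=2$ your template $N_{\PGL_2(\Fp)}(D_n)\supset D_{2n}$ is a strict containment: $N_{\PGL_2(\Fp)}(D_2)=S_4$, not $D_4$, and this is precisely the case where the cited Flicker statement is erroneous (Remark~\ref{rem:flickererratum}). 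Your determinant criterion still returns the correct answer there because $2S_4$ and $2D_4$ lie in $\SL_2(\Fp)$ under the same condition $p\equiv\pm 1\bmod 8$, but you must compute $\det$ on the \emph{full} normalizer, not just on the lift of $D_{2n}$, for the criterion to be valid as stated.
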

\begin{proof}
Every cyclic subgroup of $\SL_2(\Fp)$ order prime to $p$ must be conjugate in $\GL_2(\Fp)$ to a subgroup of one of the two Cartan subgroups $C$: the \emph{split Cartan} isomorphic to $\Fp^\times\times \Fp^\times$, or the \emph{nonsplit Cartan} isomorphic to $\F_{p^2}^\times$.  The intersection of $C$ with $\SL_2(\Fp)$ is cyclic of order $p-1$ or $p+1$, and the intersection of these groups is the cyclic group $\{\pm 1\}$ of order $2=\gcd(p-1,p+1)$.  It follows that up to $\GL_2(\Fp)$-conjugacy there is a unique cyclic subgroup $C_n$ of $\SL_2(\Fp)$ of order $n$ for each $n$ dividing $p-1$ or $p+1$, and \cite[Theorem~4.1]{Flicker} implies that it is also unique up to $\SL_2(\Fp)$-conjugacy.

For a Cartan subgroup $C$ of $\GL_2(\Fp)$, let $C^+$ denote its normalizer. It follows from \cite[Lemma~3.13]{Sutherland} that for each subgroup $H$ of $C\cap \SL_2(\Fp)$ there is at most one subgroup $G$ of $C^+\cap \SL_2(\Fp)$ with dihedral image in $\PSL(2,p)$, and that subgroup must contain $-1$.  It follows from \cite[Lemmas 3.16 and 3.18]{Sutherland} that there is exactly one $G$ for each $H\ne\{\pm 1\}$ that contains $-1$, up to conjugacy in $\GL_2(\Fp)$.
It follows that for each integer $2n>2$ dividing $p-1$ or $p+1$ that up to $\GL_2(\Fp)$-conjugacy there is a unique conjugacy class $2D_n$ of $\SL_2(\Fp)$, and it follows from \cite[Theorem~4.1]{Flicker} and Remark~\ref{rem:flickererratum} below that this $\GL_2(\Fp)$-conjugacy class splits into two $\GL_2(\Fp)$-conjugacy classes if and only if $p\equiv \pm 1\bmod 4n$.

The statements for $2A_4$, $2S_4$, $2A_5$ are immediate from \cite[Lemma~3.21]{Sutherland} and \cite[Theorem~4.1]{Flicker}.
\end{proof}

\begin{remark}\label{rem:flickererratum}
There is a minor error in the statement \cite[Theorem~4.1]{Flicker} regarding the group~$2D_2$, which is denoted $BD_{4\cdot 2}$ in \cite{Flicker}.  There are two conjugacy classes $2D_2$ in $\SL_2(\Fp)$ when $\sqrt{2}\in \Fp$, equivalently, when $p\equiv \pm 1\bmod 8$, but only one otherwise; this follows from the fact that the normalizer of $BD_{4\cdot 2}$ in $\SL_2(\overline{\F}_p)$ is $2S_4$ (not $2A_4$ as claimed in \cite{Flicker}), which is present in $\SL_2(\Fp)$ only when $\sqrt{2}\in \Fp$. The author is grateful to Yuval Flicker for clarifying this point.
\end{remark}

\section{Computational results}
In this section we present examples that realize the claims made in the introduction, including that local integral equivalence does not imply group isomorphism (\S\ref{sec:locnoniso}),  local isomorphism of number fields (\S\ref{sec:locnonlociso}), or integral equivalence (\S\ref{sec:d32}), and that solvable equivalence does not imply integral equivalence~(\S\ref{sec:d96}).  We also give a degree 32 example of locally integrally equivalent number fields in~\S\ref{sec:d32} (best possible), and a degree 96 example of solvably equivalent number fields in~\S\ref{sec:d96} (best known).

\subsection{Locally integrally equivalent subgroups need not be isomorphic}\label{sec:locnoniso}

In \cite[Question~1]{Prasad}, Prasad asks if integrally equivalent subgroups are necessarily isomorphic.  This is true in Scott's example with two subgroups of $\PSL_2(\F_{29})$ isomorphic to the alternating group $A_5$.  The following example shows that locally integrally equivalent subgroups need not be isomorphic.  Let $G$ by the symmetric group $S_{21}$ and consider the subgroups
\begin{align*}
H_1 \coloneqq \bigl\langle\, &(4\ 5)(6\ 15\ 7\ 14)(8\ 17\ 9, 16)(10\ 19\ 11\ 18)(12\ 21\ 13\ 20),\\
               &(1\ 2)(3\ 5)(6\ 20\ 8\ 18)(7\ 21\ 9\ 19)(10\ 14\ 12\ 16)(11\ 15\ 13\ 17)\,\bigr\rangle,\\
H_2 \coloneqq \bigl\langle\, &(4\ 5)(6\ 16\ 8\ 14)(7\ 17\ 9\ 15)(10\ 20\ 12\ 18)(11\ 21\ 13\ 19),\\
               &(1\ 2)(3\ 5)(6\ 20\ 8\ 18)(7\ 21\ 9\ 19)(10\ 17\ 12\ 15)(11\ 16\ 13\ 14)\,\bigr\rangle,
\end{align*}
with GAP identifiers $\langle 48,12\rangle$ and $\langle 48,13\rangle$, respectively. Each contains 41 subgroups that are $p$-cyclic for some prime~$p$.  These fall into 15 distinct $G$-conjugacy classes and 11 distinct isomorphism classes, which makes it easy to find a $G$-conjugacy class preserving bijection between them (if one takes into account the isomorphism class and the number of subgroups in each conjugacy classes, there are only 2 choices to consider).  The subgroups $H_1,H_2\le G$ are thus locally integrally equivalent, but not isomorphic.  This negatively answers Question 2.11 posed by Guralnick and Weiss in \cite{GWe}.

This example is realized by infinitely many number fields: over $\Q$ the Galois group of a generic polynomial of degre~21 is $G=S_{21}$ and the fixed fields of $H_1$ and $H_2$ are locally integrally equivalent number fields of degree $21!/48$.  It is one of many that were found by applying Corollary~\ref{cor:regiso} to the clas~$\cP$ of groups that are $p$-cyclic for some prime $p$: computing $\cP$-statistics for the isomorphism classes of groups of order up to 255 already finds 107 pairs of isomorphism classes with the same $\cP$-statistics, including four isomorphism classes of groups of order 192 with the same $\cP$-statistics.  One can often find permutation representations of degree less than $|H_1|=|H_2|$ that also work, as happens above.

\begin{question}\label{ques:solvequiviso}
Are solvably equivalent subgroups of a finite group $G$ necessarily isomorphic?
\end{question}

Question~\ref{ques:solvequiviso} is equivalent to asking whether the isomorphism class of a nonsolvable group determine its $\cP$-statistics, where $\cP$ is the class of solvable groups (by Corollary~\ref{cor:regiso}).  For the 1022 isomorphism classes of nonsolvable groups of order less than 2000, these $\cP$-statistics are all distinct, so any pair of nonisomorphic solvably equivalent subgroups must have order greater than 2000.

\subsection{Local integral equivalence does not imply local isomorphism of number fields}\label{sec:locnonlociso}

Let $G$ be the group $A_4\times S_5$ with GAP identifier $\langle 1440,5846\rangle$.  There is a unique pair of nonconjugate locally integrally equivalent subgroups $H_1,H_2\le G$, both of which are isomorphic to the dihedral group $D_6$ of order~12.
The groups $G$, $H_1$, $H_2$ can be explicitly represented as subgroups of $S_9$ via
\begin{align*}
G   &\coloneqq \bigl\langle (1\ 2\ 3)(5\ 6\ 7\ 8\ 9),\,(1\ 2)(3\ 4)(5\ 6)\bigr\rangle,\\
H_1 &\coloneqq \bigl\langle (1\ 2)(3\ 4)(5\ 6\ 7)(8\ 9),\, (1\ 3)(2\ 4)(5\ 6)\bigr\rangle,\\
H_2 &\coloneqq \bigl\langle (1\ 2)(3\ 4)(5\ 6\ 7)(8\ 9),\, (1\ 4)(2\ 3)(5\ 6)\bigr\rangle,
\end{align*}
and $H_1\cap H_2$ is cyclic of order 6.  The four maximal subgroups of $H_1$, isomorphic to $C_2^2, S_3, S_3, C_6$, correspond to distinct conjugacy classes of subgroups of $G$, and these are precisely the $G$-conjugacy classes of the four maximal subgroups of $H_2$.  There is thus a $G$-conjugacy preserving bijection between the proper subgroups of $H_1$ and $H_2$ (all of which are $p$-cyclic for some prime $p$), and the group $D_6\simeq H_1,H_2$ is not $p$-cyclic for any prime $p$.  It follows that $H_1$ and $H_2$ are locally integrally equivalent subgroups of $G$.

The subgroups $H_1$ and $H_2$ are not $G$-conjugate, even though they are $S_9$-conjugate, as can be verified by comparing their permutation characters: $\chi_{H_1}(H_1)=4$ differs from $\chi_{H_2}(H_1)=0$, and $\chi_{H_1}(H_2)=0$ differs from $\chi_{H_2}(H_2)=4$.  The group $D_6$ arises as a Galois group of extensions of $\Q_p$ for $p\not\equiv 1\bmod 6$, and it follows from Proposition~\ref{prop:decomp} that if $H_1$ is the decomposition group of a prime above $p$ in a Galois extension $L/\Q$ with Galois group $G$, then the fixed fields $K_1\coloneqq L^{H_1}$ and $K_2\coloneqq L^{H_2}$ are locally integrally equivalent fields that cannot be locally isomorphic because four primes of $K_1$ above~$2$ must have residue field degree 1 and ramification index 1 (corresponding to the four cosets in $[H_1\backslash G]$ fixed by $H_1$), but no primes of $K_2$ above $2$ can have residue field degree 1 and ramification index 1.

To realize such an example it suffices to find a pair of linearly disjoint $A_4$ and $S_5$ fields such that that there is a prime of the compositum with decomposition group conjugate to $H_1$ or~$H_2$.  A search of $A_4$ and $S_5$ fields in the $L$-functions and modular forms database (LMFDB) unramified away from 2,3,5,7 finds a suitable pair: we may take the Galois closures for the fields $F_1\coloneqq \Q[x]/(x^4 - 6x^2 - 8x + 60)$ and $F_2\coloneqq \Q[x]/(x^5+5x^3+10x-2)$ with LMFDB labels \href{https://www.lmfdb.org/NumberField/4.0.254016.2}{\texttt{4.0.254016.2}} and \href{https://www.lmfdb.org/NumberField/5.1.500000.1}{5.1.500000.1}, respectively.
The compositum of their Galois closures is a degree 1440 number field $L$ with Galois group $G$.
The 120 primes of $L$ above $2$ all have residue degree 2, ramification index 6, decomposition group conjugate to~$H_1$, and inertia group conjugate to $H_1\cap H_2$; the local algebra $L\otimes_\Q \Q_2$ is isomorphic to $k^{120}$, where $k$ is the unique $D_6$-extension of $\Q_2$ of degree 12 containing $\Q_2(\sqrt{2})$, with LMFDB label \href{https://www.lmfdb.org/LocalNumberField/2.12.22.60}{\texttt{2.12.22.60}}.

Using the \texttt{GaloisSubgroup} function in Magma \cite{Magma} one can compute defining polynomials of degree 120 for the number fields $K_1\coloneqq L^{H_1}$ and $K_2\coloneqq L^{H_2}$, and using the $p$-adic valuation \texttt{extensions} method in Sage \cite{Sage} one can determine the residue field degrees and ramification indices of the primes above~2 in $K_2$ and $K_2$ by computing all extensions of the $2$-adic valuation of $\Q$ to $K_1$ and $K_2$.  We have
\begin{align*}
2\O_{K_1} &= \p_1\p_2\p_3\p_4\p_5^6\p_6^6\p_7^6\p_8^6\p_9^6\p_{10}^6\p_{11}^6\p_{12}^6\p_{13}^2\p_{14}^2\p_{15}^3\p_{16}^3\p_{17}^6\p_{18}^6\p_{19}^6\p_{20}^6,\\
2\O_{K_2} &= \q_1^2\q_2^2\q_3^2\q_4^2\q_5^3\q_6^3\q_7^3\q_8^3\q_9^6\q_{10}^6\q_{11}^6\q_{12}^6\q_{13}\q_{14}\q_{15}^6\q_{16}^6\q_{17}^6\q_{18}^6\q_{19}^6\q_{20}^6,
\end{align*}
where the primes $\p_i$ of $K_1$ and $\q_i$ of $K_2$ have residue degree 1 for $i\le 12$ and residue degree~$2$ for $i>12$.

\begin{remark}
This example can be viewed as a refinement of the example of Mantilla-Soler \cite{Mantilla-Soler} noted in Remark~\ref{rem:ramdiff}: the sums 82 and 86 of the ramification indices differ. But in the Mantilla-Soler example the products of the ramification indices also differ, which is possible because the subgroups are rationally equivalent but not locally integrally equivalent. Proposition~\ref{prop:localarithequiv} shows that this is not possible when the subgroups are locally integrally equivalent.  To our knowledge, this is the first example of a pair of arithmetically equivalent number fields and a prime $p$ for which the sums of the ramification indices of the primes above $p$ differ but the products do not.
\end{remark}

Finally, we note that the groups $H_1$ and $H_2$ are isomorphic to $D_6$, hence solvable, but the values of the permutation characters $\chi_{H_1}$ and $\chi_{H_2}$ differ on these groups, as noted above, so they are not solvably equivalent, which shows that solvable equivalence is a strictly stronger condition (as one would expect).

\subsection{A minimal degree example of local integral equivalence}\label{sec:d32}
An exhaustive search of isomorphisms classes of groups of order less than 1024 in the small groups database \cite{BEO} finds 74 groups $G$ that contain nonconjugate $H_1, H_2\le G$ that are locally integrally equivalent and have trivial normal core in~$G$ (meaning that $(G,H_1,H_2)$ is a faithful Gassmann triple).  The order of $G$ is necessarily not a prime power, since $p$-groups can be locally integrally equivalent only if they are conjugate, so only 1,206,112 of the 11,759,892 groups of order less than 1024 need to be checked.  Of these, two have order 384, seventeen have order 576, fifty have order 768, and five have order 864, with the index of $H_1,H_2$ in $G$ taking values in $\{32,48,64,72\}$.

The two groups $G$ of order 384 have GAP identifiers $\langle 384,18046\rangle$ and $\langle 384, 18050\rangle$, and are isomorphic to transitive permutation groups of degree 32 with LMFDB labels \href{https://www.lmfdb.org/GaloisGroup/32T9403}{\texttt{32T9403}} and \href{https://www.lmfdb.org/GaloisGroup/32T9408}{\texttt{32T9408}}, following the labeling convention in \cite{CH}.  Both are (nonsplit) 2-extensions of $D_4\times S_4$, making it feasible to explicitly construct examples of nonconjugate number fields $K_1$ and $K_2$ of degree 32 with common Galois closure $L$ with $G=\Gal(L/\Q)$, and $H_1=\Gal(L/K_1)$ and $H_2=\Gal(L/K_2)$ locally integrally equivalent, by taking a quadratic extension of the compositum of the Galois closure of two suitably chosen $D_4$ and~$S_4$ quartic number fields.  Below we describe one such example in detail.

The Magma computer algebra system \cite{Magma} includes a database of transitive permutation groups of degree up to 48 whose construction is described in \cite{CH,HRT,Hulpke}.  An exhaustive analysis of the 40,238 transitive groups of degree less than 32 finds none that contain a pair of locally integrally equivalent subgroups of index equal to the degree.  The following example thus achieves the minimal possible degree~$32$; for comparison, the minimal degree of arithmetically equivalent number fields is $7$; see \cite{BdS}.

We begin with the $D_4$ field $\Q[x]/(x^4-6x^2-9)$ and the $S_4$ field $\Q[x]/(x^4 - 2x^3 - 6x + 3)$, with LMFDB labels \href{https://www.lmfdb.org/NumberField/4.2.9216.1}{\texttt{4.2.9216.1}} and \href{https://www.lmfdb.org/NumberField/4.2.3888.1}{\texttt{4.2.3888.1}}, which are linearly disjoint over $\Q$.
The compositum of their Galois closures coincides with the splitting field of the polynomial
\[
x^{16} + 12x^{14} + 72x^{12} + 120x^{10} - 234x^8 + 108x^6 + 396x^4 - 432x^2 + 81,
\]
which has Galois group $D_4\times S_4$.  The number fields $K_1\coloneqq \Q[x]/(f_1(x))$, $K_2\coloneqq \Q[x]/(f_2(x))$ defined by
\begin{align*}
f_1&\coloneqq x^{32} + 12x^{28} + 72x^{24} + 120x^{20} - 234x^{16} + 108x^{12} + 396x^8 - 432x^4 + 81,\\
f_2&\coloneqq x^{32} - 12x^{28} + 72x^{24} - 120x^{20} - 234x^{16} - 108x^{12} + 396x^8 + 432x^4 + 81,
\end{align*}
have the same Galois closure $L$ of degree 384.
The group $G\coloneqq \Gal(L/\Q)$ is the transitive permutation group \href{https://www.lmfdb.org/GaloisGroup/32T9403}{\texttt{32T9403}}, generated by
\begin{footnotesize}
\begin{align*}
\sigma_0 &\coloneqq (3, 4, 5, 6, 7, 8)(9, 10, 11, 12, 13, 14)(15, 16, 17, 18, 19, 20)(21, 22, 23)(24, 25, 26)(27, 28)(29, 30)(31, 32),\\
\sigma_1 &\coloneqq (3, 5)(6, 8)(9, 10)(11, 14)(12, 13)(15, 17)(18, 20)(21, 24)(22, 26)(23, 25)(27, 31)(28, 32),\\
\sigma_2 &\coloneqq (1, 2)(3, 17)(4, 16)(5, 15)(6, 20)(7, 19)(8, 18)(9, 13)(10, 12)(22, 23)(25, 26)(29, 30),\\
\sigma_3 &\coloneqq (1, 3, 2, 15)(4, 9, 16, 12)(5, 24, 17, 21)(6, 30, 18, 29)(7, 22, 19, 25)(8, 14, 20, 11)(10, 32, 13, 28)(23, 27, 26, 31).
\end{align*}
\end{footnotesize}
\vspace{-12pt}

The group $G$ contains exactly two conjugacy classes of subgroups of index 32 with trivial normal core, represented by $H_1\coloneqq \langle \sigma_1,\sigma_2\rangle$ and $H_2\coloneqq \langle \sigma_0,\sigma_2\rangle$, both isomorphic to $D_6$.
If we view $G$ as acting on the roots of $f_1(x)$, then under a suitable ordering of roots we have $H_1=\Gal(L/K_1)$ and $H_2=\Gal(L/K_2)$.
The subgroups $H_1$ and $H_2$ are locally integrally equivalent but not integrally equivalent.
Indeed, for a suitable choice of bases for $[H_1\backslash G]$ and $[H_2\backslash G]$, every $M\in \Hom_{\Z[G]}(\Z[H_1\backslash G],\Z[H_2\backslash G])$ has the form

\begin{footnotesize}
\arraycolsep=2pt
\def\arraystretch{0.75}
\[
M:=
\begin{bmatrix}
x_8 & x_8 & x_5 & x_8 & x_8 & x_7 & x_8 & x_5 & x_8 & x_2 & x_1 & x_8 & x_6 & x_8 & x_8 & x_7 & x_7 & x_8 & x_7 & x_4 & x_3 & x_3 & x_1 & x_7 & x_8 & x_6 & x_2 & x_4 & x_5 & x_6 & x_7 & x_8\\
x_8 & x_8 & x_7 & x_8 & x_8 & x_5 & x_8 & x_7 & x_8 & x_3 & x_4 & x_8 & x_7 & x_8 & x_8 & x_5 & x_6 & x_8 & x_5 & x_4 & x_2 & x_2 & x_1 & x_6 & x_8 & x_7 & x_3 & x_1 & x_7 & x_7 & x_6 & x_8\\
x_7 & x_6 & x_8 & x_3 & x_7 & x_8 & x_5 & x_1 & x_2 & x_8 & x_7 & x_5 & x_8 & x_3 & x_7 & x_8 & x_8 & x_2 & x_1 & x_5 & x_8 & x_8 & x_6 & x_8 & x_6 & x_4 & x_8 & x_7 & x_8 & x_8 & x_4 & x_7\\
x_8 & x_1 & x_6 & x_8 & x_8 & x_2 & x_8 & x_5 & x_8 & x_7 & x_8 & x_4 & x_5 & x_8 & x_1 & x_7 & x_7 & x_8 & x_7 & x_8 & x_5 & x_6 & x_8 & x_2 & x_8 & x_6 & x_7 & x_8 & x_3 & x_3 & x_7 & x_4\\
x_8 & x_8 & x_6 & x_8 & x_8 & x_7 & x_8 & x_6 & x_8 & x_2 & x_4 & x_8 & x_5 & x_8 & x_8 & x_7 & x_7 & x_8 & x_7 & x_1 & x_3 & x_3 & x_4 & x_7 & x_8 & x_5 & x_2 & x_1 & x_6 & x_5 & x_7 & x_8\\
x_5 & x_7 & x_8 & x_2 & x_6 & x_8 & x_7 & x_4 & x_3 & x_8 & x_5 & x_7 & x_8 & x_2 & x_5 & x_8 & x_8 & x_3 & x_1 & x_7 & x_8 & x_8 & x_7 & x_8 & x_7 & x_1 & x_8 & x_6 & x_8 & x_8 & x_4 & x_6\\
x_8 & x_4 & x_7 & x_8 & x_8 & x_3 & x_8 & x_7 & x_8 & x_5 & x_8 & x_1 & x_7 & x_8 & x_1 & x_5 & x_6 & x_8 & x_6 & x_8 & x_7 & x_7 & x_8 & x_3 & x_8 & x_7 & x_6 & x_8 & x_2 & x_2 & x_5 & x_4\\
x_8 & x_8 & x_7 & x_8 & x_8 & x_6 & x_8 & x_7 & x_8 & x_3 & x_1 & x_8 & x_7 & x_8 & x_8 & x_6 & x_5 & x_8 & x_6 & x_1 & x_2 & x_2 & x_4 & x_5 & x_8 & x_7 & x_3 & x_4 & x_7 & x_7 & x_5 & x_8\\
x_5 & x_7 & x_8 & x_7 & x_6 & x_8 & x_7 & x_8 & x_6 & x_1 & x_3 & x_7 & x_8 & x_7 & x_6 & x_8 & x_8 & x_5 & x_8 & x_2 & x_1 & x_4 & x_2 & x_8 & x_7 & x_8 & x_4 & x_3 & x_8 & x_8 & x_8 & x_5\\
x_4 & x_8 & x_2 & x_8 & x_1 & x_6 & x_4 & x_7 & x_8 & x_5 & x_8 & x_8 & x_2 & x_8 & x_8 & x_3 & x_3 & x_8 & x_5 & x_8 & x_7 & x_7 & x_8 & x_5 & x_1 & x_7 & x_6 & x_8 & x_7 & x_7 & x_6 & x_8\\
x_7 & x_3 & x_8 & x_5 & x_7 & x_1 & x_6 & x_8 & x_7 & x_8 & x_7 & x_3 & x_8 & x_6 & x_2 & x_8 & x_8 & x_7 & x_8 & x_5 & x_8 & x_8 & x_6 & x_4 & x_5 & x_8 & x_8 & x_7 & x_1 & x_4 & x_8 & x_2\\
x_4 & x_8 & x_3 & x_8 & x_1 & x_7 & x_1 & x_6 & x_8 & x_7 & x_8 & x_8 & x_3 & x_8 & x_8 & x_2 & x_2 & x_8 & x_7 & x_8 & x_5 & x_6 & x_8 & x_7 & x_4 & x_5 & x_7 & x_8 & x_5 & x_6 & x_7 & x_8\\
x_7 & x_5 & x_8 & x_3 & x_7 & x_8 & x_6 & x_4 & x_2 & x_8 & x_7 & x_6 & x_8 & x_3 & x_7 & x_8 & x_8 & x_2 & x_4 & x_6 & x_8 & x_8 & x_5 & x_8 & x_5 & x_1 & x_8 & x_7 & x_8 & x_8 & x_1 & x_7\\
x_8 & x_4 & x_5 & x_8 & x_8 & x_2 & x_8 & x_6 & x_8 & x_7 & x_8 & x_1 & x_6 & x_8 & x_4 & x_7 & x_7 & x_8 & x_7 & x_8 & x_6 & x_5 & x_8 & x_2 & x_8 & x_5 & x_7 & x_8 & x_3 & x_3 & x_7 & x_1\\
x_7 & x_5 & x_8 & x_5 & x_7 & x_8 & x_5 & x_8 & x_7 & x_1 & x_2 & x_6 & x_8 & x_6 & x_7 & x_8 & x_8 & x_7 & x_8 & x_3 & x_4 & x_1 & x_3 & x_8 & x_6 & x_8 & x_4 & x_2 & x_8 & x_8 & x_8 & x_7\\
x_1 & x_8 & x_3 & x_8 & x_4 & x_7 & x_4 & x_5 & x_8 & x_7 & x_8 & x_8 & x_3 & x_8 & x_8 & x_2 & x_2 & x_8 & x_7 & x_8 & x_6 & x_5 & x_8 & x_7 & x_1 & x_6 & x_7 & x_8 & x_6 & x_5 & x_7 & x_8\\
x_5 & x_2 & x_8 & x_7 & x_6 & x_4 & x_7 & x_8 & x_5 & x_8 & x_6 & x_2 & x_8 & x_7 & x_3 & x_8 & x_8 & x_6 & x_8 & x_7 & x_8 & x_8 & x_7 & x_1 & x_7 & x_8 & x_8 & x_5 & x_1 & x_4 & x_8 & x_3\\
x_1 & x_8 & x_2 & x_8 & x_4 & x_5 & x_1 & x_7 & x_8 & x_6 & x_8 & x_8 & x_2 & x_8 & x_8 & x_3 & x_3 & x_8 & x_6 & x_8 & x_7 & x_7 & x_8 & x_6 & x_4 & x_7 & x_5 & x_8 & x_7 & x_7 & x_5 & x_8\\
x_6 & x_7 & x_8 & x_2 & x_5 & x_8 & x_7 & x_1 & x_3 & x_8 & x_6 & x_7 & x_8 & x_2 & x_6 & x_8 & x_8 & x_3 & x_4 & x_7 & x_8 & x_8 & x_7 & x_8 & x_7 & x_4 & x_8 & x_5 & x_8 & x_8 & x_1 & x_5\\
x_8 & x_1 & x_7 & x_8 & x_8 & x_3 & x_8 & x_7 & x_8 & x_6 & x_8 & x_4 & x_7 & x_8 & x_4 & x_6 & x_5 & x_8 & x_5 & x_8 & x_7 & x_7 & x_8 & x_3 & x_8 & x_7 & x_5 & x_8 & x_2 & x_2 & x_6 & x_1\\
x_8 & x_8 & x_5 & x_1 & x_8 & x_7 & x_8 & x_3 & x_1 & x_7 & x_8 & x_8 & x_6 & x_4 & x_8 & x_7 & x_7 & x_4 & x_2 & x_8 & x_5 & x_6 & x_8 & x_7 & x_8 & x_3 & x_7 & x_8 & x_6 & x_5 & x_2 & x_8\\
x_3 & x_7 & x_4 & x_7 & x_3 & x_8 & x_2 & x_8 & x_5 & x_8 & x_5 & x_7 & x_1 & x_7 & x_6 & x_1 & x_4 & x_6 & x_8 & x_7 & x_8 & x_8 & x_7 & x_8 & x_2 & x_8 & x_8 & x_6 & x_8 & x_8 & x_8 & x_5\\
x_8 & x_8 & x_7 & x_4 & x_8 & x_5 & x_8 & x_2 & x_1 & x_5 & x_8 & x_8 & x_7 & x_1 & x_8 & x_6 & x_5 & x_4 & x_3 & x_8 & x_7 & x_7 & x_8 & x_6 & x_8 & x_2 & x_6 & x_8 & x_7 & x_7 & x_3 & x_8\\
x_2 & x_6 & x_4 & x_5 & x_2 & x_8 & x_3 & x_8 & x_7 & x_8 & x_7 & x_5 & x_1 & x_6 & x_7 & x_4 & x_1 & x_7 & x_8 & x_6 & x_8 & x_8 & x_5 & x_8 & x_3 & x_8 & x_8 & x_7 & x_8 & x_8 & x_8 & x_7\\
x_8 & x_8 & x_6 & x_4 & x_8 & x_7 & x_8 & x_3 & x_4 & x_7 & x_8 & x_8 & x_5 & x_1 & x_8 & x_7 & x_7 & x_1 & x_2 & x_8 & x_6 & x_5 & x_8 & x_7 & x_8 & x_3 & x_7 & x_8 & x_5 & x_6 & x_2 & x_8\\
x_6 & x_7 & x_8 & x_7 & x_5 & x_8 & x_7 & x_8 & x_5 & x_4 & x_3 & x_7 & x_8 & x_7 & x_5 & x_8 & x_8 & x_6 & x_8 & x_2 & x_4 & x_1 & x_2 & x_8 & x_7 & x_8 & x_1 & x_3 & x_8 & x_8 & x_8 & x_6\\
x_7 & x_3 & x_8 & x_6 & x_7 & x_4 & x_5 & x_8 & x_7 & x_8 & x_7 & x_3 & x_8 & x_5 & x_2 & x_8 & x_8 & x_7 & x_8 & x_6 & x_8 & x_8 & x_5 & x_1 & x_6 & x_8 & x_8 & x_7 & x_4 & x_1 & x_8 & x_2\\
x_6 & x_2 & x_8 & x_7 & x_5 & x_1 & x_7 & x_8 & x_6 & x_8 & x_5 & x_2 & x_8 & x_7 & x_3 & x_8 & x_8 & x_5 & x_8 & x_7 & x_8 & x_8 & x_7 & x_4 & x_7 & x_8 & x_8 & x_6 & x_4 & x_1 & x_8 & x_3\\
x_2 & x_5 & x_1 & x_6 & x_2 & x_8 & x_3 & x_8 & x_7 & x_8 & x_7 & x_6 & x_4 & x_5 & x_7 & x_1 & x_4 & x_7 & x_8 & x_5 & x_8 & x_8 & x_6 & x_8 & x_3 & x_8 & x_8 & x_7 & x_8 & x_8 & x_8 & x_7\\
x_3 & x_7 & x_1 & x_7 & x_3 & x_8 & x_2 & x_8 & x_6 & x_8 & x_6 & x_7 & x_4 & x_7 & x_5 & x_4 & x_1 & x_5 & x_8 & x_7 & x_8 & x_8 & x_7 & x_8 & x_2 & x_8 & x_8 & x_5 & x_8 & x_8 & x_8 & x_6\\
x_8 & x_8 & x_7 & x_1 & x_8 & x_6 & x_8 & x_2 & x_4 & x_6 & x_8 & x_8 & x_7 & x_4 & x_8 & x_5 & x_6 & x_1 & x_3 & x_8 & x_7 & x_7 & x_8 & x_5 & x_8 & x_2 & x_5 & x_8 & x_7 & x_7 & x_3 & x_8\\
x_7 & x_6 & x_8 & x_6 & x_7 & x_8 & x_6 & x_8 & x_7 & x_4 & x_2 & x_5 & x_8 & x_5 & x_7 & x_8 & x_8 & x_7 & x_8 & x_3 & x_1 & x_4 & x_3 & x_8 & x_5 & x_8 & x_1 & x_2 & x_8 & x_8 & x_8 & x_7\\
\end{bmatrix}
\]
\end{footnotesize}

\noindent
for some $x_1,\ldots, x_8\in \Z$ corresponding to the decomposition of $G$ into eight double cosets $H_1gH_2$, consisting of $2,2,2,2,3,3,6,12$ right cosets of $H_1$, respectively.  A (nontrivial) calculation finds that
\begin{align*}
\det M = -\,&(2(x_2-x_3)^2+3(x_5-x_6)^2)^8\\
\cdot\ &(2(x_1-x_4)+(x_5+x_6-2x_7))^6\\
\cdot\ &(2(x_1+x_2+x_3+x_4)-(x_5+x_6+2x_7+4x_8))^3\\
\cdot\ &(2(x_1-x_2-x_3+x_4)-(x_5+x_6+2x_7-4x_8))^3\\
\cdot\ &(2(x_1-x_4)-3(x_5+x_6-2x_7))^2\\
\cdot\ &(2(x_1+x_2+x_3+x_4)+3(x_5+x_6+2x_7+4x_8))\\
\cdot\ &(2(x_1-x_2-x_3+x_4)+3(x_5+x_6+2x_7-4x_8)).
\end{align*}
The assignment
\[
x_1=x_2=1,\quad x_3=-1,\quad x_4=x_5=x_6=x_7=x_8=0
\]
gives $\det M = 2^{32}$, while the assignment
\[
x_5=1,\quad x_1=x_2=x_3=x_4=x_6=x_7=x_8=0
\]
gives $\det M = 3^{12}$; thus $d(H_1,H_2)=1$.
It follows that $H_1$ and $H_2$ are locally integrally equivalent, by Proposition~\ref{prop:localequiv}.
But no assignment of $x_1,\ldots,x_8\in \Z$ makes $\det M=\pm 1$; indeed,  any such assignment would require all 7 factors of $\det M$ listed above to have values in $\{\pm 1\}$, which is not possible.  Thus $H_1$ and $H_2$ are not integrally equivalent; as noted in the introduction, this negatively answers Question 2.10 of Guralnick and Weiss in \cite{GWe}.

There are infinitely many nonisomorphic variations of this example; replacing $f_1(x)$ and $f_2(x)$ with $f_1(x\sqrt{T})$ and $f_2(x\sqrt{T})$ yields polynomials with Galois group $G$ over $\Q(T)$; for almost all squarefree $a\in \Z$ the substitution $T=a$ yields nonisomorphic $K_1,K_2$ ramified at primes dividing $a$.

\subsection{A degree 96 example of solvable equivalence}\label{sec:d96}

The results of \S\ref{sec:d32} imply that any group $G$ that contains nonconjugate solvably equivalent subgroups must have order at least $32\cdot 60 = 1920$, since nonconjugate locally integrally equivalent subgroups must have index at least 32, and nonsolvable groups must have order at least 60.  A search of the small groups database shows that there are no such $G$ of order 1920 or 1980, and a search of transitive groups of degree up to 48 and order at most $48\cdot 60 = 2880$ finds no such $G$, which implies a lower bound of 2940.

An exhaustive search of transitive groups of degree up to 48 and order at most 48,000 finds transitive groups of degrees 12, 16, 20, 24, 30, 32, 36, and 40 that contain nonconjugate solvably equivalent subgroups, including examples of index 96, 192, 384, 576, 672, and 768.  The first example of index~96 occurs for
the transitive group \href{https://www.lmfdb.org/GaloisGroup/16T1654}{\texttt{16T1654}} of order 5760, which is the smallest order we found.  This group~$G$ contains five conjugacy classes of subgroups isomorphic to $A_5$, of which exactly two have representatives~$H_1$ and $H_2$ with the property that every proper subgroup of $H_1$ is also a proper subgroup of~$H_2$.  The groups~$H_1$ are thus solvably equivalent subgroups of index 96.
There are 5 double cosets $H_1gH_2$, comprised of $5,6,10,15,60$ right cosets of $H_1$, respectively; each $M\in \Hom_{\Z[G]}(\Z[H_1\backslash G],\Z[H_2\backslash G])$ can thus be viewed as a matrix in indeterminates $x_1,x_2,x_3,x_4,x_5\in \Z$, and we have
\begin{align*}
\det M = -\,&(5x_1+6x_2+10x_3+15x_4+60x_5)\\
\cdot\ &(x_1-6x_2-10x_3+3x_4+12x_5)^5\\
\cdot\ &(3x_1+2x_2-2x_3-7x_4+4x_5)^{15}\\
\cdot\ &(3x_1-2x_2+2x_3+x_4-4x_5)^{30}\\
\cdot\ &(x_1+2x_2-2x_3+3x_4-4x_5)^{45}
\end{align*}
By solving 32 systems of linear equations, one finds that no assignment of $x_1,x_2,x_3,x_4,x_5\in \Z$ makes every factor in $\det M$ equal to $\pm 1$.
Thus $H_1$ and $H_2$ are not integrally equivalent.

The regular inverse Galois problem for \texttt{16T1654} is known (it is a quotient of \href{https://www.lmfdb.org/GaloisGroup/12T277}{\texttt{12T277}}), thus there are infinitely many pairs of solvably equivalent number fields $K_1$ and $K_2$ with Galois closure $L$ that satisfy $\Gal(L/\Q)=G$, $\Gal(L/K_1)=H_1$, $\Gal(L/K_2)=H_2$.  For example, we may take $L$ as the splitting field of
\small
\[
x^{16} - 2x^{15} + 3x^{14} - 16x^{13} + 18x^{12} - 10x^{10} + 40x^9 - 39x^8 + 54x^7 + 23x^6 + 16x^5 - 140x^4 - 188x^3 - 28x^2 + 104x - 4,
\]
\normalsize
corresponding to the number field with LMFDB label \href{https://www.lmfdb.org/NumberField/16.4.711702043399998895292416.2}{\texttt{16.4.711702043399998895292416.2}}.  The field $L$ contains solvable equivalent subfields $K_1$ and $K_2$ of degree 96 that are necessarily arithmetically equivalent, locally isomorphic, and have isomorphic class groups, by Proposition~\ref{prop:solvablearithequiv}.  One can find 190 examples of \texttt{16T1654} number fields in the Kl\"uners and Malle  \href{http://galoisdb.math.uni-paderborn.de/groups/view?deg=16&num=1654}{Database of Number Fields} \cite{KM}.

\begin{remark}
In \cite[Remark~4.3]{Scott92} Scott raises several questions related to integral permutation modules that lie in the same genus, which in our setting corresponds to local integral equivalence.
The \emph{rank} of a group $G$ acting on a finite set $\Omega$ is the number of orbits of the diagonal action on $\Omega\times\Omega$.
Scott shows that if the rank of $G$ acting on $\Omega$ is 2 or 3 then local integral equivalence of $\Z[G]$-modules $\Omega$ and $\Omega'$ implies an isomorphism of $G$-sets \cite[Proposition~4.1]{Scott92}.  His example with $G=\PSL_2(29)$ proves that this does not hold when the rank is $8$.  The example in \S\ref{sec:d96} shows that this also fails to hold when the rank is $5$.
\end{remark}



\section*{Acknowledgments} 
The author would like to thank Alex Bartel for introducing him to the notion of a Gassmann triple at an LMFDB workshop at Oregon State University in 2015.  The author is also grateful to Yuval Flicker for the clarification noted in Remark~\ref{rem:flickererratum}, and to David Roe and Raymond van Bommel for their assistance in finding a particularly efficient method to compute ramification indices that is exploited in \S\ref{sec:locnonlociso}.

\bibliographystyle{amsplain}

\begin{thebibliography}{99}
\bibitem{AKMS}
Donu Arapura, Justin Katz, D.B. McReynolds, and P. Solapurkar, \href{https://link.springer.com/article/10.1007/s00209-018-2077-2}{\textit{Integral Gassmann equivalence of algebraic and hyperbolic manifolds}}, Math. Z. \textbf{291} (2019), 179--214.  \mr{3936064}

\bibitem{BP}
Alex Bartel and Aurel Page, \href{https://doi.org/10.1112/jtopol/jtw023}{\textit{Torsion homology and regulators of isospectral manifolds}}, J. Topology~\textbf{9} (2016), 1237--1256.  \mr{3620456}

\bibitem{BEO}
Hans Ulrich Besche, Bettina Eick, and E.A. O'Brien, \href{http://www.ams.org/journals/era/2001-07-01/S1079-6762-01-00087-7/}{\textit{The groups of order at most $2000$}}, Electron. Res. Announc. Amer. Math. Soc. \textbf{7} (2001), 1--4.  \mr{1826989}

\bibitem{BV}
Jeremy Booher and Jos\'e-Felipe Voloch, \href{https://link.springer.com/article/10.1007/s40993-020-00222-0}{\textit{Recovering algebraic curves from $L$-function of Hilbert class fields}}, Res. Number Theory \textbf{6} (2020), article no. 43.  \mr{4167327}

\bibitem{Magma}
Wieb Bosma, John J. Cannon, Claus Fieker, and Allan Steel (Eds.), \href{http://magma.maths.usyd.edu.au/magma/handbook/}{\textit{Handbook of {Magma} functions}}, v2.25, 2020.

\bibitem{BdS}
Wieb Bosma and Bart de Smit, \href{http://link.springer.com/chapter/10.1007/3-540-45455-1_6}{\textit{On arithmetically equivalent number fields of small degree}}, in \textit{Algorithmic Number Theory, Fifth International Symposium, ANTS-V}, C. Fieker and D.R. Kohel (Eds.), Lec. Notes Comp. Sci. \textbf{2369} (2002), 67--79. \mr{204107)}

\bibitem{CH}
John J. Cannon and Derek Holt, \href{http://projecteuclid.org/euclid.em/1227121384}{\textit{The transitive permutation groups of degree 32}}, Experiment. Math. \textbf{17} (2008) 307--314.  \mr{2355702}

\bibitem{CKvdZ}
Gunther Cornelissen, Aristides Kontogeorgis, and Lotte van der Zalm, \href{http://dx.doi.org/10.1016/j.jnt.2009.08.002}{\textit{Arithmetic equivalence for function fields, the Goss zeta function and a generalisation}}, J. Number Theory \textbf{130} (2010), 1000--1012. \mr{2600417}

\bibitem{CdSXMS}
Gunther Cornelissen, Bart de Smit, Xin Li, Matilde Marcolli, and Harry Smit, \href{https://link.springer.com/article/10.1007/s40993-018-0143-9}{\textit{Characterization of global fields by Dirichlet $L$-series}}, Res. Number Theory \textbf{5} (2019), article no. 7.  \mr{3887225}

\bibitem{Conlon}
S.B. Conlon, \href{http://www.sciencedirect.com/science/article/pii/0021869369901112}{\textit{Monomial representations under integral similarity}}, J. Algebra \textbf{13} (1969), 496--508.  \mr{0252527}

\bibitem{deSmit98}
Bart de Smit, \href{http://link.springer.com/chapter/10.1007/BFb0054878}{\textit{Generating arithmetically equivalent number fields with elliptic curves}}, in \textit{Algorithmic Number Theory, Third International Symposium (ANTS-III)}, J.P. Buhler (Ed.), Lec. Notes Comp. Sci. \textbf{1423} (1998), 392--399. \mr{1726087}

\bibitem{dSP}
Bart de Smit and Robert Perlis, \href{http://www.ams.org/journals/bull/1994-31-02/S0273-0979-1994-00520-8/home.html}{\textit{Zeta functions do not determine class numbers}}, Bull. Amer. Math. Soc. \textbf{31} (1994), 213--215. \mr{1260520}

\bibitem{Flicker}
Yuval Flicker, \href{https://jtnb.centre-mersenne.org/article/JTNB_2019__31_3_555_0.pdf}{\textit{Conjugacy classes of finite subgroups of $\SL(2,F),\SL(3,F)$}}, J. Th\'eor. Nombres Bordeaux \textbf{31} (2019), 555--571. \mr{4102614}

\bibitem{GAP}
The GAP group, \href{http://www.gap-system.org}{\textit{GAP --- Groups, Algorithms, and Programming}}, Version 4.8.3, 2016.

\bibitem{Gassmann}
Fritz Gassmann, \href{http://link.springer.com/article/10.1007/BF01283860}{\textit{Bemerkungen zu der vorstehenden Arbeit von Hurwitz}} (comments on the article \textit{\"Uber Beziehungen zwischen den Primidealen eines algebraischen K\"orpers und den Substitutionen seiner Gruppe} by Hurwitz) Math. Z. \textbf{25} (1926), 655-665.

\bibitem{GWW}
Carolyn Gordon, David L. Webb, and Scott Wolpert, \href{http://www.ams.org/journals/bull/1992-27-01/S0273-0979-1992-00289-6/home.html}{\textit{One cannot hear the shape of a drum}}, Bull. Amer. Math. Soc. (N.S.) \textbf{27} (1992), 134--138.  \mr{1136137}

\bibitem{GW}
Robert M. Guralnick and David B. Wales, \href{http://www.sciencedirect.com/science/article/pii/0021869385900419}{\textit{Subgroups inducing the same permutation representation,~II}}, J. Algebra \textbf{96} (1985), 94--113.  \mr{0808843}

\bibitem{GWe}
Robert M. Guralnick and Al Weiss, \href{http://www.ams.org/books/conm/153/}{\textit{Transitive permutation lattices in the same genus and embedding groups}}, Contemp. Math. \textbf{153} (1993), 21--33.  \mr{1247496}

\bibitem{HH}
Lorenz Halbeisen and Norbert Hungerb\"uhler, \href{http://dx.doi.org/10.1002/(SICI)1097-0118(199907)31:3<255::AID-JGT7>3.3.CO;2-Y}{\textit{Generation of isospectral graphs}}, J. Graph Theory \textbf{31} (1999), 255--265.  \mr{1688950}

\bibitem{HRT}
Derek Holt, Gordon Royle, Gareth Tracey, \href{https://doi.org/10.1016/j.jalgebra.2021.06.018}{\textit{The transitive groups of degree $48$ and some applications}}, Journal of Algebra, published online 29 June 2021.

\bibitem{Hulpke}
Alexander Hulpke, \href{http://www.sciencedirect.com/science/article/pii/S0747717104001130}{\textit{Constructing transitive permutation groups}}, J. Symbolic Comput. \textbf{39} (2005), 1--30. \mr{2168238}

\bibitem{Iwasawa}
Kenkichi Iwasawa, \href{http://www.jstor.org/stable/1969863}{\textit{On the rings of valuation vectors}}, Ann. of Math. \textbf{57} (1953), 331--356. \mr{0053970}

\bibitem{Kac}
Mark Kac, \href{http://www.jstor.org/stable/2313748}{\textit{Can one hear the shape of a drum?}}, Amer. Math. Monthly \textbf{73} (1966) no. 4, 1--23. \mr{0201237}

\bibitem{Klingen}
Norbert Klingen, \href{https://global.oup.com/academic/product/arithmetical-similarities-9780198535980?}{\textit{Arithmetical similarities}}, Oxford Science Publications, 1998. \mr{1638821}

\bibitem{KM}
J\"urgen Kl\"uners and Gunter Malle, \href{https://doi.org/10.1112/S1461157000000851}{\textit{A database for field extensions of the rationals}}, LMS J. Comput. Math. \textbf{4} (2001), 182--196. \mr{1901356}

\bibitem{Komatsu76}
Keiichi Komatsu, \href{http://projecteuclid.org/euclid.kmj/1138847384}{\textit{On the adele rings of algebraic number fields}}, Kodai Math. Sem. Rep. \textbf{28} (1976), 78--84.  \mr{0424760}

\bibitem{Komatsu84}
Keiichi Komatsu, \href{http://matwbn.icm.edu.pl/ksiazki/aa/aa43/aa4321.pdf}{\textit{On adele rings of arithmetically equivalent fields}}, Acta Arith. \textbf{43} (1984), 93--95. \mr{0736723}

\bibitem{LMM}
Benjamin Linowitz, D.B. McReynolds, and Nicholas Miller, \href{https://aif.centre-mersenne.org/item/?id=AIF_2017__67_2_451_0}{\textit{Locally equivalent correspondences}}, Ann. Inst. Fourier (Grenoble) \textbf{67} (2017), 451--482.  \mr{3669503}

\bibitem{LMFDB}
The LMFDB Collaboration, \href{https://www.lmfdb.org}{\textit{The $L$-functions and modular forms database}}, 2021, available at \href{https://www.lmfdb.org}{\texttt{www.lmfdb.org}}.

\bibitem{Mantilla-Soler}
Guillermo Mantilla-Soler, \href{https://nyjm.albany.edu/j/2019/25-25v.pdf}{\textit{On a question of Perlis and Stuart regarding arithmetic equivalence}}, New York J. Math. \textbf{25} (2019), 558--573.  \mr{3982253}.

\bibitem{McReynolds}
D.B. McReynolds, \href{https://doi.org/10.4153/CJM-2014-003-9}{\textit{Geometric spectra and commensurability}}, Canad. J. Math. \textbf{67} (2015), 184--197. \mr{3292699}

\bibitem{Milne}
J.S. Milne, \href{https://www.jmilne.org/math/CourseNotes/FT.pdf}{\textit{Fields and Galois theory}}, v4.61, 2020, available at \href{https://www.jmilne.org/math}{\texttt{www.jmilne.org/math}}. 

\bibitem{Milnor}
John Milnor, \href{https://doi.org/10.1073/pnas.51.4.542}{\textit{Eigenvalues of the Laplace operator on certain manifolds}}, Proc. Natl. Acad. Sci. \textbf{51} (1964), 542. \mr{162204}

\bibitem{Neukirch}
J\"urgen Neukirch, \href{https://link.springer.com/book/10.1007/978-3-662-03983-0}{\textit{Algebraic number theory}}, Springer, 1999.  \mr{1697859}

\bibitem{Perlis77}
Robert Perlis, \href{http://www.sciencedirect.com/science/article/pii/0022314X77900701}{\textit{On the equation $\zeta_k(s)=\zeta_{k'}(s)$}}, J. Number Theory \textbf{9} (1977), 342--360. \mr{0447188}

\bibitem{Perlis78}
Robert Perlis, \href{http://www.sciencedirect.com/science/article/pii/0022314X78900203}{\textit{On the class numbers of arithmetically equivalent fields}}, J. Number Theory \textbf{10} (1978), 489--509.  \mr{0515057}

\bibitem{PerlisStuart}
Donna Joy Stuart and Rober Perlis, \href{https://doi.org/10.1006/jnth.1995.1092}{\textit{A new characterization of arithmetic equivalence}}, J. Number Theory \textbf{53} (1995), 300-308.  \mr{1348765}

\bibitem{Prasad}
Dipendra Prasad, \href{http://www.sciencedirect.com/science/article/pii/S0001870816305394}{\textit{A refined notion of arithmetically equivalent number fields and curves with isomorphic Jacobians}}, Advances Math. \textbf{312} (2017), 198--208. \mr{3635810}

\bibitem{PR}
Dipendra Prasad and Conjeeveram S. Rajan, \href{http://www.sciencedirect.com/science/article/pii/S0022314X02000537}{\textit{On an Archimedean analogue of Tate's conjecture}}, J. Number Theory \textbf{99} (2003), 180--184.  \mr{1957251}

\bibitem{Roggenkamp}
Klaus W. Roggenkamp, \textit{Permutation modules in the same genus, results from D. Hahn's Ph.D. thesis} (English summary), in \emph{Darstellungstheorietage Jena 1996}, B. K\"uhlshammer and K. Rosenbaum, eds., Sitzungsber. Math.-Naturwiss. Kl., Akad. Gemein. Wiss. Erfurt, Erfurt, Germany, 1996, 211-223.  \mr{1441096}

\bibitem{Sage}
The Sage Development Team, \emph{{S}age {M}athematics {S}oftware ({V}ersion 9.2)}, 2020, \url{http://www.sagemath.org}.

\bibitem{Scott92}
Leonard L. Scott, \href{https://doi.org/10.1142/9789814535724}{\textit{Integral equivalence of permutation representations}}, in \textit{Group theory (Granville, OH, 1992)}, 262--274, World Sci. Publ., 1993.  \mr{1348907}

\bibitem{Serre}
Jean-Pierre Serre, \href{http://link.springer.com/book/10.1007/978-1-4684-9458-7}{\textit{Linear representations of finite groups}}, Springer, 1977.  \mr{0450380}

\bibitem{Shafarevich}
Igor R. Shafarevich, \href{https://link.springer.com/book/10.1007/978-3-642-38010-5}{\textit{Basic algebraic geometry 2}}, English third edition, Springer, 2013; translated from 2007 Russian third edition by Miles Reid.  \mr{3100288}

\bibitem{Solomatin}
Pavel Solomatin, \href{https://scholarlypublications.universiteitleiden.nl/handle/1887/3147167}{\textit{Global fields and their $L$-functions}}, PhD Thesis, Universiteit Leiden, 2021.

\bibitem{Smit}
Harry Smit, \href{https://arxiv.org/abs/1910.12321}{\textit{$L$-series and homomorphisms of number fields}}, arXiv:1910.12321.

\bibitem{Sunada}
Toshikazu Sunada, \href{http://www.jstor.org/stable/1971195}{\textit{Riemannian coverings and isospectral manifolds}}, Ann. of Math. \textbf{121} (1985), 169--186. \mr{0782558}

\bibitem{Sutherland}
Andrew V.\ Sutherland, \href{https://doi.org/10.1017/fms.2015.33}{\textit{Computing images of Galois representations attached to elliptic curves}}, Forum Math. Sigma \textbf{4} (2016), 79 pages.  \mr{3482279}

\bibitem{SutherlandNotes}
Andrew V.\ Sutherland, \href{https://math.mit.edu/~drew/ArithmeticEquivalenceLectureNotes.pdf}{\textit{Arithmetic equivalence and isospectrality}}, preprint, 2018.

\bibitem{Weil}
Andr\'e Weil, \href{https://doi.org/10.1007/BF01389962}{\textit{Exercices dyadiques}}, Invent. Math. \textbf{27} (1974), 1--22.  \mr{0379445}

\bibitem{Wood}
Melanie Matchett Wood, \href{http://people.math.harvard.edu/~mmwood/Splitting.pdf}{\textit{How to determine the splitting type of a prime}}, 2011.

\bibitem{Zywina}
David Zywina, \href{http://projecteuclid.org/euclid.dmj/1442364463}{\textit{The inverse Galois problem for $\PSL_2(\Fp)$}}, Duke Math. J. \textbf{164} (2015), 2253--2292. \mr{3397386}

\end{thebibliography}


\begin{dajauthors}
\begin{authorinfo}[avs]
  Andrew V. Sutherland\\
  Department of Mathematics\\
  Massachusetts Institute of Technology\\
  77 Massachusetts Avenue\\
  Cambridge, Massachusetts  02139\\
  USA\\
  \href{mailto:drew@math.mit.edu}{\texttt{drew@math.mit.edu}}\\
  \href{https://math.mit.edu/~drew}{\texttt{https://math.mit.edu/\raisebox{0.5ex}{\texttildelow}drew}}
\end{authorinfo}
\end{dajauthors}

\end{document}